\theoremstyle{plain}
\newtheorem{thm}{Theorem}[section]
\newtheorem{prop}[thm]{Proposition}
\newtheorem{lemma}[thm]{Lemma}
\newtheorem{cor}[thm]{Corollary}
\newtheorem{question}[thm]{Question}
\theoremstyle{definition}
\theoremstyle{remark}
\newtheorem{rem}[thm]{Remark}
\newcommand{\HH}{\mathrm{H}}
\begin{document}

\title{Universal deformation rings and dihedral $2$-groups}

\author{Frauke M. Bleher}
\address{Department of Mathematics\\University of Iowa\\
Iowa City, IA 52242-1419}
\email{fbleher@math.uiowa.edu}
\thanks{The author was supported in part by  
NSA Grant H98230-06-1-0021 and NSF Grant DMS06-51332.}
\subjclass{Primary 20C20; Secondary 20C15, 16G10}
\keywords{Universal deformation rings, dihedral groups, endo-trivial modules, nilpotent blocks}

\begin{abstract}
Let $k$ be an algebraically closed field of characteristic $2$, and let $W$ be the ring of infinite Witt vectors over $k$. 
Suppose $D$ is a dihedral $2$-group.  We prove that the universal deformation ring 
$R(D,V)$ of an endo-trivial $kD$-module $V$ 
is always isomorphic to $W[\mathbb{Z}/2\times\mathbb{Z}/2]$.
As a consequence we obtain a similar result for modules $V$ with stable endomorphism ring $k$
belonging to an arbitrary nilpotent block 
with defect group $D$.
This confirms for such $V$ conjectures on the ring structure
of the universal deformation ring of $V$ which had previously been shown for 
$V$ belonging to cyclic blocks or to blocks with Klein four defect groups.
\end{abstract}

\maketitle

%%%%%%%%%%%%%%%%%%%%%%%%%%%%%%%%%%%%%%%%%%%%%%%%%%%%%%%%%
%% Introduction
%%%%%%%%%%%%%%%%%%%%%%%%%%%%%%%%%%%%%%%%%%%%%%%%%%%%%%%%%

\section{Introduction}
\label{s:intro}
\setcounter{equation}{0}

Let $k$ be an algebraically closed field of positive characteristic $p$, let $W=W(k)$ be the ring
of infinite Witt vectors over $k$, and let $G$ be a finite group.
There are various classical results in the literature concerning the lifting of finitely generated
$kG$-modules over complete local commutative Noetherian rings with residue field $k$,
such as Green's liftability theorem. 
To understand and generalize
these results, it is useful to reformulate them in terms of  deformation rings. For example,
Alperin has proved in \cite{alpendotrivial}  that in case $G$ is a $p$-group, every endo-trivial
$kG$-module $V$ can be lifted to an endo-trivial $WG$-module. This can be reformulated as saying 
that for every such $V$, there is a surjection from the universal deformation 
ring $R(G,V)$ to $W$. A natural question is then to determine $R(G,V)$ itself.
In this paper, we determine the universal deformation rings $R(D,V)$ for all endo-trivial
$kD$-modules $V$ when $k$ has characteristic $p=2$ and $D$ is a dihedral $2$-group.

For arbitrary $p$ and $G$, a finitely generated
$kG$-module $V$ is called endo-trivial if the $kG$-module
$\mathrm{Hom}_k(V,V)\cong V^*\otimes_k V$ 
is isomorphic to a direct
sum of the trivial simple $kG$-module $k$ and a projective $kG$-module. 
Endo-trivial modules play an important role
in the modular representation theory of finite groups, in particular in the context of
derived equivalences and stable equivalences of block algebras, and also as building blocks for
the more general endo-permutation modules, which for many groups are the sources of the
simple modules (see e.g. \cite{dade,thevenaz}).
In \cite{carl1.5,carl2}, Carlson and Th\'{e}venaz classified all endo-trivial $kG$-modules
when $G$ is a $p$-group. Since by \cite{carl1}, the endo-trivial $kG$-modules $V$ of a $p$-group $G$ 
are precisely the modules whose stable endomorphism ring is one-dimensional over $k$, it follows
by \cite[Prop. 2.1]{bc} that $V$ has a well-defined universal deformation ring $R(G,V)$. 

The topological ring $R(G,V)$ is universal with respect to deformations of $V$ over complete local 
commutative Noetherian rings $R$ with residue field $k$. A deformation of $V$ over such a ring
$R$ is given by the isomorphism class of a finitely generated $RG$-module $M$ which is free
over $R$, together 
with a $kG$-module isomorphism $k\otimes_R M\to V$ (see \S\ref{s:prelim}). 
Note that all these rings $R$, including $R(G,V)$, have a natural structure as $W$-algebras.

In number theory, the main motivation for studying universal
deformation rings for finite groups is to provide evidence for and counter-examples to various 
possible conjectures concerning ring theoretic properties of universal deformation rings for 
profinite Galois groups. The idea is that universal deformation rings for finite groups can be more easily
described using deep results from modular representation theory
due to Brauer,  
Erdmann \cite{erd}, Linckelmann \cite{linckel,linckel2}, 
Butler-Ringel \cite{buri} and others. 
Moreover, the results in \cite{lendesmit} show that if $\Gamma$ is a profinite group and
$V$ is a finite dimensional $k$-vector space with a continuous $\Gamma$-action
which has a universal deformation ring, then $R(\Gamma,V)$ is
the inverse limit of the universal deformation rings $R(G,V)$ when 
$G$ runs over all finite discrete quotients of $\Gamma$ through which the $\Gamma$-action on $V$ factors. Thus to answer questions about 
the ring structure of $R(\Gamma,V)$, it is natural to first consider the case when $\Gamma=G$ is finite. 
Later in the introduction 
we will discuss some number 
theoretic problems which originate from considering how our results for finite groups arise from 
arithmetic.

Suppose now that $G$ is an arbitrary finite group and $V$ is a $kG$-module such that the stable endomorphism ring
$\underline{\mathrm{End}}_{kG}(V)$ is one-dimensional over $k$, i.e. $V$ has a well-defined
universal deformation ring $R(G,V)$. The results in \cite{bc} led to 
the following question relating the universal deformation rings $R(G,V)$ to the local
structure of $G$ given by defect groups of blocks of $kG$.

\begin{question}
\label{qu:main} 
Let $B$ be a block of $kG$ with defect group $D$, and suppose $V$ is a finitely generated $kG$-module with stable endomorphism ring $k$ 
such that the unique $($up to isomorphism$)$ non-projective indecomposable summand of $V$ belongs to $B$. Is the universal deformation 
ring $R(G,V)$ of $V$ isomorphic to a subquotient ring of the group ring $WD$?
\end{question}

The results in \cite{bc,bl,3sim} show that  this question has a positive answer
in case $B$ is a block with cyclic defect groups, i.e. a block of finite representation type,
or a tame block with Klein four defect groups, or a tame block with dihedral defect groups
which is Morita equivalent to the principal $2$-modular block of a finite simple group.
For the latter type of blocks, there are precisely three isomorphism classes of simple modules.

In \cite{bc4.9,bc5}, it was shown that if $p=2$, $G$ is the symmetric group $S_4$ and 
$E$ is a $2$-dimensional simple $kS_4$-module then
$R(G,E)\cong W[t]/(t^2,2t)$, giving an example of a universal deformation ring which is not a complete intersection, thus answering a question of M. Flach \cite{flach}. 
A new proof of this result has been given in \cite{cantrememberhisname} using only elementary obstruction calculus.
In \cite{bc4.9,bc5}, it was additionally shown that this example arises
from arithmetic in the following way.
There are infinitely many real quadratic fields $L$ such that the Galois group $G_{L,\emptyset}$ of the 
maximal totally unramified extension of $L$ surjects onto $S_4$ and 
$R(G_{L,\emptyset},E)\cong R(S_4,E)\cong  W[t]/(t^2,2t)$ 
is not a complete intersection, where $E$ is viewed as a module of $G_{L,\emptyset}$ via inflation. 
The universal deformation rings in \cite{bc,bl} 
are all complete intersections, 
whereas the results in \cite{3sim} provide an infinite series of $G$ and $V$ for which $R(G,V)$ is not a complete intersection.

In this paper, we consider the endo-trivial modules for the group ring $kD$ when $k$ has characteristic $2$ 
and $D$ is a dihedral $2$-group of order at least $8$.
Note that $kD$ is its own block and the trivial simple module $k$ is the unique  simple $kD$-module
up to isomorphism. 
Our main result is as follows, where $\Omega$ 
denotes the syzygy, or Heller, operator (see for example \cite[\S 20]{alp}).

\begin{thm}
\label{thm:supermain}
Let $k$ be an algebraically closed field of characteristic $2$,
let $d\ge 3$, and let $D$ be a dihedral group of order $2^d$. 
Suppose $V$ is a finitely generated  endo-trivial
$kD$-module. 
\begin{itemize}
\item[i.] If $V$ is indecomposable
and $\mathfrak{C}$ is the component of the stable Auslander-Reiten quiver of $kD$ 
containing $V$,
then $\mathfrak{C}$ contains either $k$ or $\Omega(k)$,
and all modules belonging to $\mathfrak{C}$ are endo-trivial. 

\item[ii.]
The universal deformation ring $R(D,V)$ is isomorphic to $W[\mathbb{Z}/2\times \mathbb{Z}/2]$.
Moreover, every universal lift $U$ of $V$ over $R=R(D,V)$ is endo-trivial in the sense that
the $RD$-module $U^*\otimes_RU$ is isomorphic to the direct sum of the trivial $RD$-module
$R$ and a free $RD$-module.
\end{itemize}
\end{thm}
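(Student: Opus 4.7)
My plan is to handle parts (i) and (ii) in tandem: first identify all indecomposable endo-trivial modules combinatorially via the dihedral-algebra structure of $kD$, and then reduce the deformation-ring computation to the trivial module by Heller translation and character twisting.

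For part (i), I would begin with the Carlson--Th\'evenaz classification of $T(D)$ for dihedral $2$-groups: $T(D) \cong \mathbb{Z} \oplus (\mathbb{Z}/2)^2$ for $d \geq 3$, with $\mathbb{Z}$-generator $[\Omega(k)]$. Since $kD$ is a dihedral (special biserial) algebra, its indecomposable modules and almost split sequences are controlled by the string/band combinatorics of Butler--Ringel. I would locate $k$ in the stable AR-quiver, identify the component $\mathfrak{C}$ containing it (of shape $\mathbb{Z} A_\infty^\infty$), and use the explicit description of the middle terms of the AR-sequences in $\mathfrak{C}$ to propagate endo-triviality along irreducible maps. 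The key inductive step is that for each almost split sequence $0 \to \tau M \to E \to M \to 0$ with $M$ endo-trivial, the indecomposable summands of $E$ can be computed combinatorially and seen to be endo-trivial as well. The same argument works starting from $\Omega(k)$ and gives a second component, yielding part (i).

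For part (ii), I would first compute $R(D, k)$. A deformation of the trivial module over a complete local $W$-algebra $R$ with residue field $k$ is exactly a character $D \to (1 + \mathfrak{m}_R) \subset R^{\times}$; since $D$ is a finite $2$-group and $\mathrm{char}\, k = 2$, every such character factors uniquely through $D^{\mathrm{ab}} \cong \mathbb{Z}/2 \times \mathbb{Z}/2$, so the functor is pro-represented by $W[\mathbb{Z}/2 \times \mathbb{Z}/2]$, with obviously endo-trivial universal lift. For a general indecomposable endo-trivial $V$, I would invoke $R(D, V) \cong R(D, \Omega V)$ together with part (i) to reduce (up to finitely many torsion representatives of $T(D)$) to comparing $R(D, V)$ with $R(D, k)$. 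Fixing a $W$-lift $\widetilde{V}$ of $V$ (which exists by Alperin's liftability theorem) and twisting $\widetilde{V}$ by the universal character produces a lift of $V$ over $W[D^{\mathrm{ab}}]$, hence a surjection $R(D, V) \twoheadrightarrow W[\mathbb{Z}/2 \times \mathbb{Z}/2]$. On tangent spaces this map is an isomorphism, since the endo-trivial decomposition $V^* \otimes_k V \cong k \oplus P$ (with $P$ projective) gives
\begin{equation*}
\mathrm{Ext}^1_{kD}(V, V) \;\cong\; H^1(D,\, k \oplus P) \;\cong\; H^1(D, k) \;\cong\; k^2.
\end{equation*}
To upgrade this to an isomorphism of rings, I would bound $R(D, V)$ from above by showing that it has $W$-rank at most $4 = |D^{\mathrm{ab}}|$, presenting it as a quotient of $W[[s, t]]$ and identifying relations $s^2 + 2s$ and $t^2 + 2t$ that originate from the $2$-torsion in $D^{\mathrm{ab}}$, then using the explicit universal lift to rule out further relations. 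Endo-triviality of the universal lift $U$ would follow by lifting the idempotent decomposition $\mathrm{End}_k(V) = k \oplus P$ through the complete local $W$-algebra $R = R(D, V)$, producing $U^* \otimes_R U \cong R \oplus Q$ with $Q$ free.

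The main obstacle will be part (i). Showing endo-triviality for the \emph{entire} AR-component, rather than just the $\Omega$-orbit of $k$, demands tight control of the string combinatorics and of the decomposition of $\mathrm{End}_k(V)$ for every $V$ in $\mathfrak{C}$. Within part (ii) the delicate point is proving injectivity of the surjection $R(D, V) \twoheadrightarrow W[\mathbb{Z}/2 \times \mathbb{Z}/2]$: bounding the $W$-rank from above via a direct presentation forces the deformation ring to collapse to the group algebra rather than admit extra obstructions.
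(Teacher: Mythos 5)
Your overall architecture for part (ii) — produce a map between $R(D,V)$ and $W[\mathbb{Z}/2\times\mathbb{Z}/2]$ and then pin the ring down — is reasonable, but the half you leave unproved is exactly the content of the theorem. Your character-twisting of an Alperin lift gives (after fixing a point, see below) a surjection $R(D,V)\twoheadrightarrow W[\mathbb{Z}/2\times\mathbb{Z}/2]$, but that alone does not determine $R(D,V)$: both $W[[s,t]]$ and $W[[s,t]]/(s^2+2s)$ also surject onto $W[\mathbb{Z}/2\times\mathbb{Z}/2]$ and have $2$-dimensional tangent space. Everything therefore hinges on the upper bound, i.e.\ on exhibiting $R(D,V)$ as a \emph{quotient} of $W[\mathbb{Z}/2\times\mathbb{Z}/2]$, and here you only assert that relations $s^2+2s$, $t^2+2t$ ``originate from the $2$-torsion in $D^{\mathrm{ab}}$.'' There is no a priori reason the universal deformation ring of an arbitrary module for a $2$-group is a quotient of $WD^{\mathrm{ab}}$ (that it should even be a subquotient of $WD$ is the open Question~\ref{qu:main} motivating the paper), so this step needs a genuine mechanism. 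The paper supplies one: restrict the universal lift to the two cyclic subgroups $\langle\sigma\rangle$ and $\langle\tau\rangle$, where $\mathrm{Res}\,V\cong k\oplus(\mathrm{free})$ has universal deformation ring $W[\mathbb{Z}/2]$; universality gives $\alpha\colon W[\mathbb{Z}/2]\otimes_WW[\mathbb{Z}/2]\to R(D,V)$, and surjectivity of $\alpha$ is proved by classifying \emph{all} lifts of $V$ over $k[\epsilon]/(\epsilon^2)$ (the modules $B_{\nu,n,\lambda}$ of Lemma~\ref{lem:extensions}) and checking their restrictions to $\langle\sigma\rangle$, $\langle\tau\rangle$ detect them. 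Something of this kind (or the paper's alternative closing step: a surjection from $W[\mathbb{Z}/2\times\mathbb{Z}/2]$ plus four distinct $W$-points of $R(D,V)$, which your $\pm1$-twists of $\widetilde V$ could in fact provide) is indispensable and absent from your plan.

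Two further problems. First, $T(D)\cong\mathbb{Z}\oplus(\mathbb{Z}/2)^2$ is wrong: for dihedral $2$-groups $T(D)$ is torsion-free of rank $2$ (the image of $\Xi$ in $\mathbb{Z}\times\mathbb{Z}$ is generated by $(1,1)$ and $(1,-1)$). Consequently your reduction in (ii) ``up to finitely many torsion representatives'' to $R(D,k)$ fails: there are infinitely many $\Omega$-orbits of indecomposable endo-trivial modules (the $A_{\sigma,n}$, $A_{\tau,n}$, $n\ge0$), and $R(D,V)\cong R(D,\Omega V)$ only collapses each orbit, so the argument must treat all of them uniformly, as the paper does in Lemmas~\ref{lem:extensions}--\ref{lem:almostdone}. (Your twisting construction does apply to any $V$, so this is repairable, but as written the reduction is false.) Second, matching tangent-space dimensions ($\mathrm{Ext}^1_{kD}(V,V)\cong H^1(D,k)\cong k^2$) does not make your map an isomorphism on tangent spaces; you must show the specific map $-\otimes_kV\colon\mathrm{Ext}^1_{kD}(k,k)\to\mathrm{Ext}^1_{kD}(V,V)$ is injective (this is \cite[Thm.~2.6]{AC}, used in the paper's Lemma~\ref{lem:extensions}); without it even the surjection onto $W[\mathbb{Z}/2\times\mathbb{Z}/2]$ is not established. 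For part (i), your string-combinatorics plan is plausible but must ultimately identify the Carlson--Th\'evenaz representatives with explicit positions in the components of $k$ and $\Omega(k)$ — the paper does this by computing $\Xi$ on the terms of the almost split sequences obtained by tensoring the sequence ending in $\Omega^{-1}(k)$ with odd-dimensional modules — and, for the deformation-theoretic use in (ii), what is really needed is the restriction data of Lemma~\ref{lem:thiswilldo}, which your outline does not produce.
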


In particular, $R(D,V)$ is always a complete intersection 
and isomorphic to a quotient ring of the group ring $WD$.

It is a natural question to ask whether Theorem \ref{thm:supermain} can be used to construct
deformation rings arising from arithmetic. More precisely, 
let $L$ be a number field, let $S$ be a finite set of places of $L$, and let $L_S$ be the maximal 
algebraic extension of $L$ unramified outside $S$. Denote by $G_{L,S}$ the Galois group of 
$L_S$ over $L$.
Suppose $k$ has characteristic $p$, $G$ is a finite group and $V$ is a finitely generated 
$kG$-module with stable endomorphism ring $k$.
As in \cite{bc5}, one can ask whether there are $L$ and $S$ such 
that there is a surjection $\psi:G_{L,S}\to G$ which induces an isomorphism 
of deformation rings $R(G_{L,S},V)\to R(G, V)$ 
when $V$ is viewed as a representation for $G_{L,S}$ via $\psi$. It was shown in 
\cite{bc5} that a sufficient condition for $R(G_{L,S},V)\to R(G, V)$ to be an isomorphism for all 
such $V$ is that  $\mathrm{Ker}(\psi)$ has no non-trivial pro-$p$ quotient. If this condition is
satisfied, we say the group $G$ caps $L$ for $p$ at $S$.

As mentioned earlier, this arithmetic problem was considered in \cite{bc5} for the symmetric
group $S_4$ in case $p=2$. In fact, it was shown that there are infinitely many real quadratic 
fields $L$ such that  $S_4$ caps $L$ for $p=2$ at $S=\emptyset$. Since the Sylow $2$-subgroups 
of $S_4$ are isomorphic to a dihedral group $D_8$ of order $8$, this can be used to show that 
$D_8$ caps infinitely many sextic fields $L'$ for $p=2$ at $S=\emptyset$.
In particular, $R(G_{L',\emptyset},V)\cong R(D_8, V)\cong 
W[\mathbb{Z}/2\times\mathbb{Z}/2]$ for all endo-trivial $kD_8$-modules $V$. Since the fields $L'$ have
degree $6$ over $\mathbb{Q}$, this raises the
question of whether one can replace $L'$ by smaller degree extensions. 
Another question is if one can find similar results for dihedral groups $D$ of arbitrary $2$-power order.
As in the proof of \cite[Thm. 3.7(i)]{bc5}, one can show that $D$ does not cap $\mathbb{Q}$ for $p=2$ 
at  any finite set $S$ of rational primes.
Hence the best possible results one can expect to be valid for all endo-trivial $kD$-modules 
should involve extensions $L$ of $\mathbb{Q}$ of degree at least $2$.

We now discuss the proof of Theorem \ref{thm:supermain}.
As stated earlier, the endo-trivial $kD$-modules are
precisely the $kD$-modules whose stable endomorphism ring is one-dimensional over
$k$. The results of \cite[\S5]{CT} show that the group $T(D)$ of equivalence classes 
of endo-trivial $kD$-modules is generated by the classes of the relative syzygies of the
trivial simple $kD$-module $k$. 
To prove part (i) of Theorem \ref{thm:supermain} we relate this
description for indecomposable endo-trivial $kD$-modules to their location in the
stable Auslander-Reiten quiver of $kD$.
For part (ii) of Theorem \ref{thm:supermain},
suppose $D=\langle \sigma,\tau\rangle$ 
where $\sigma$ and $\tau$ are two elements of order $2$ and $\sigma\tau$ has order $2^{d-1}$, 
and let $V$ be an indecomposable endo-trivial $kD$-module.
We prove that there exists a continuous local 
$W$-algebra homomorphism 
$\alpha:W[\mathbb{Z}/2\times\mathbb{Z}/2]\to R(D,V)$ by considering restrictions of $V$ to
$\langle\sigma\rangle$ and $\langle\tau\rangle$. 
We then analyze the $kD$-module structures
of all lifts of $V$ over the dual numbers  $k[\epsilon]/(\epsilon^2)$ to show that $\alpha$ is
in fact surjective. 
Using the ordinary irreducible representations of $D$, we  prove that
there are four distinct continuous $W$-algebra homomorphisms $R(D,V)\to W$  and show that
this implies that $\alpha$ is an isomorphism.

In \cite{brouepuig,puig}, Brou\'{e} and Puig introduced and studied so-called nilpotent blocks. 
Using  \cite{puig},
we obtain the following result as an easy consequence of Theorem 
\ref{thm:supermain}.

\begin{cor}
\label{cor:nilpotent}
Let $k$ and $D$ be as in Theorem \ref{thm:supermain}. Let $G$ be a finite group, and
let $B$ be a nilpotent block of $kG$ with defect group $D$. 
Suppose $V$ is a finitely generated $B$-module with stable endomorphism
ring $k$.
Then the universal deformation ring $R(G,V)$ is isomorphic to $W[\mathbb{Z}/2\times \mathbb{Z}/2]$.
\end{cor}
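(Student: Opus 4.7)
The plan is to reduce to Theorem \ref{thm:supermain} using Puig's structure theorem for nilpotent blocks \cite{puig}. Let $\tilde{B}$ be the block of $WG$ that lifts $B$, and let $i\in\tilde{B}$ be a source idempotent. Puig showed that the source algebra $i\tilde{B}i$ is isomorphic to $WD$ as interior $D$-algebras. In particular $\tilde{B}$ is Morita equivalent to $WD$ via the bimodule $\tilde{B}i$, and reduction modulo $p=2$ gives a Morita equivalence $F:B\text{-mod}\to kD\text{-mod}$.

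First I would push $V$ across $F$. Since $F$ is exact, preserves projectives, and preserves $\mathrm{Hom}$-sets, it induces an equivalence of stable module categories. Hence $V':=F(V)$ is a finitely generated $kD$-module with stable endomorphism ring $k$, so by Carlson's theorem \cite{carl1} it is endo-trivial. Applying Theorem \ref{thm:supermain}(ii) yields
\[
R(D,V')\;\cong\;W[\mathbb{Z}/2\times\mathbb{Z}/2].
\]

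Second, I would transport deformations across $F$. For any complete local commutative Noetherian $W$-algebra $R$ with residue field $k$, the base change $R\otimes_W \tilde{B}i$ of the Morita bimodule is projective on both sides, so tensoring with it sends $R$-free lifts of $V$ to $R$-free lifts of $V'$ and commutes with reduction modulo the maximal ideal of $R$. This produces a bijection between deformations of $V$ and deformations of $V'$ over $R$, functorial in $R$, whence the two deformation functors are naturally isomorphic and $R(G,V)\cong R(D,V')\cong W[\mathbb{Z}/2\times\mathbb{Z}/2]$.

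The main point requiring care is the second step: one has to verify that Puig's equivalence is genuinely an equivalence over $W$ (not only modulo $p$) and that it behaves correctly under scalar extension $W\to R$, so that the identification of universal deformation rings is canonical rather than merely an abstract ring isomorphism. Once this compatibility of Puig's equivalence with deformations is in place, the corollary follows directly from Theorem \ref{thm:supermain}.
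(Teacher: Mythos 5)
Your proposal is correct and follows essentially the same route as the paper: invoke Puig's theorem that the block of $WG$ lifting $B$ is Morita equivalent to $WD$, transfer $V$ to an endo-trivial $kD$-module $V'$, and identify $R(G,V)$ with $R(D,V')$ before applying Theorem \ref{thm:supermain}. The only difference is that the paper simply cites \cite[Prop. 2.5]{bl} for the compatibility of the $W$-level Morita equivalence with deformation functors, whereas you sketch that base-change argument directly.
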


The paper is organized as follows:
In \S\ref{s:prelim}, we provide some background on universal deformation rings for finite
groups. In \S\ref{s:dihedral}, we study some subquotient modules of the free $kD$-module
of rank $1$ and describe lifts of two such
$kD$-modules over $W$ using the ordinary irreducible representations of $D$.
In \S\ref{s:endotrivial}, we describe the locations of the indecomposable endo-trivial $kD$-modules
in the stable Auslander-Reiten quiver of $kD$ using \cite{AC,CT}.
In \S\ref{s:udr}, we complete the proof of Theorem \ref{thm:supermain} and Corollary \ref{cor:nilpotent}.

The author would like to thank K. Erdmann and M. Linckelmann for helpful discussions on nilpotent blocks. She would also like to thank the referee for 
very useful comments which simplified some of the proofs.

%%%%%%%%%%%%%%%%%%%%%%%%%%%%%%%%%%%%%%%%%%%%%%%%%%%%%%%%%
%% Preliminaries 
%%%%%%%%%%%%%%%%%%%%%%%%%%%%%%%%%%%%%%%%%%%%%%%%%%%%%%%%%

\section{Preliminaries}
\label{s:prelim}
\setcounter{equation}{0}

Let $k$ be an algebraically closed field of characteristic $p>0$, let $W$ be the ring of infinite Witt 
vectors over $k$ and let $F$ be the fraction field of $W$. Let ${\mathcal{C}}$ be the category of 
all complete local commutative Noetherian rings with residue field $k$. The morphisms in 
${\mathcal{C}}$ are continuous $W$-algebra homomorphisms which induce the identity map on $k$. 

Suppose $G$ is a finite group and $V$ is a finitely generated $kG$-module. 
If $R$ is an object in $\mathcal{C}$, a finitely generated $RG$-module $M$ is called
a lift of $V$ over $R$ if $M$ is free over $R$ and $k\otimes_R M\cong V$ as $kG$-modules. Two lifts 
$M$ and $M'$ of $V$ over $R$ are said to be isomorphic if there is an $RG$-module isomorphism 
$\alpha:M\to M'$ which respects the $kG$-module isomorphisms $k\otimes_R M\cong V$ and
$k\otimes_R M'\cong V$. The isomorphism class of a lift of $V$ 
over $R$ is called a deformation of $V$ over $R$, and the set of such deformations is denoted by 
$\mathrm{Def}_G(V,R)$. The deformation functor ${F}_V:{\mathcal{C}} \to \mathrm{Sets}$
is defined to be the covariant functor which sends an object $R$ in ${\mathcal{C}}$ to 
$\mathrm{Def}_G(V,R)$.

In case there exists an object $R(G,V)$ in ${\mathcal{C}}$ and a lift $U(G,V)$ of $V$ over $R(G,V)$ 
such that for each $R$ in ${\mathcal{C}}$ and for each lift $M$ of $V$ over $R$ there is a unique 
morphism $\alpha:R(G,V)\to R$ in ${\mathcal{C}}$ such that $M\cong R\otimes_{R(G,V),\alpha}
U(G,V)$, then $R(G,V)$ is called the universal deformation ring of $V$ and the isomorphism class
of the lift $U(G,V)$ is called the universal deformation of $V$. In other words, $R(G,V)$ represents
the functor ${F}_V$ in the sense that ${F}_V$ is naturally isomorphic to 
$\mathrm{Hom}_{{\mathcal{C}}}(R(G,V),-)$. If $R(G,V)$ and the universal deformation 
corresponding to $U(G,V)$ exist, then they are 
unique up to unique isomorphism.
For more information on deformation rings see \cite{lendesmit} and \cite{maz1}.

The following four results were proved in \cite{bc} and in \cite{3sim}, respectively. Here $\Omega$ 
denotes the syzygy, or Heller, operator for $kG$ (see for example \cite[\S 20]{alp}).

\begin{prop}
\label{prop:stablend}
{\rm \cite[Prop. 2.1]{bc}}
Suppose $V$ is a finitely generated $kG$-mod\-ule with stable endomorphism ring 
$\underline{\mathrm{End}}_{kG}(V)=k$.  Then $V$ has  a universal deformation ring $R(G,V)$.
\end{prop}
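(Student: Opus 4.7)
The plan is to verify Schlessinger's criteria for the deformation functor $F_V\colon\mathcal{C}\to\mathrm{Sets}$, so that $F_V$ is pro-representable and hence $R(G,V)$ exists. Note $F_V(k)$ is a singleton. It suffices to check the following conditions for small extensions $A''\to A$ in $\mathcal{C}$: (H1) the canonical map
\[
\eta\colon F_V(A'\times_A A'')\longrightarrow F_V(A')\times_{F_V(A)}F_V(A'')
\]
is surjective; (H2) $\eta$ is bijective when $A=k$ and $A''=k[\epsilon]/(\epsilon^2)$; (H3) the tangent space $F_V(k[\epsilon]/(\epsilon^2))$ is finite-dimensional over $k$; and (H4) $\eta$ is bijective whenever $A'=A''$ and $A'\to A$ is small.

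Conditions (H1) and (H2) would follow by direct gluing: given lifts $M'$ over $A'$ and $M''$ over $A''$ together with a chosen identification of their reductions to $A$, the fibre product $M'\times_A M''$ is free of the appropriate rank over $A'\times_A A''$ and inherits a natural $G$-action, producing a lift of $V$ whose two projections recover $M'$ and $M''$. For (H3), one identifies $F_V(k[\epsilon]/(\epsilon^2))$ with $\mathrm{Ext}^1_{kG}(V,V)$ via the standard correspondence sending a lift $M$ to the short exact sequence $0\to V\xrightarrow{\epsilon} M\to V\to 0$; this Ext-group is finite-dimensional over $k$ because $V$ is finitely generated and $kG$ is a finite-dimensional algebra.

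The main obstacle is (H4), which demands injectivity of $\eta$ and therefore forces one to control automorphisms of lifts reducing to the identity on $V$. Concretely, two lifts $M_1,M_2$ over $A'\times_A A'$ with equal pairs of projections to $A'$ give rise to an automorphism $\phi$ of a single lift over $A'$ whose reduction modulo the maximal ideal is the identity on $V$, and one needs to show $\phi$ is inner enough that $M_1=M_2$ as deformations. This is where the hypothesis $\underline{\mathrm{End}}_{kG}(V)=k$ enters decisively: it forces $\mathrm{End}_{kG}(V)$ to be a local ring whose maximal ideal consists of endomorphisms factoring through a projective $kG$-module. Combined with the self-injectivity of $kG$ and with the essential uniqueness of projective lifts over rings in $\mathcal{C}$, this rigidity propagates inductively along the successive small extensions used to climb from $k$ to a general ring in $\mathcal{C}$, yielding the required bijection. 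Once (H4) is verified, Schlessinger's theorem delivers the universal deformation ring $R(G,V)$, completing the proof.
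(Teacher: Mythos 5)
First, note that the paper itself gives no proof of this statement: it is quoted from \cite[Prop.~2.1]{bc}, so the comparison is with the proof there. Your conditions (H1)--(H3) are fine, but they are the easy part: they hold for \emph{every} finitely generated $kG$-module and only produce a versal deformation ring (this is already Mazur/Schlessinger). The entire content of the proposition is the passage from versal to universal, i.e.\ your step (H4), and that is exactly where your argument has a genuine gap. Concretely: (a) your claim that $\underline{\mathrm{End}}_{kG}(V)=k$ forces $\mathrm{End}_{kG}(V)$ to be local is false in the stated generality. The proposition allows $V$ to have projective direct summands (indeed Lemma 2.3(ii) of this paper, a \emph{consequence} of the cited results, treats $V\cong V_0\oplus P$); for example $V=k\oplus kG$ with $G$ a $p$-group has $\underline{\mathrm{End}}_{kG}(V)=k$ but its endomorphism ring contains nontrivial idempotents, so it is not local. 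Locality of the full endomorphism ring is precisely the classical hypothesis ($\mathrm{End}_{kG}(V)=k$) that this proposition is designed to weaken, so you cannot quietly reinstate it. (b) Even for indecomposable $V$, the sentence ``this rigidity propagates inductively along the successive small extensions'' is an assertion, not an argument, and it is the decisive point.

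What actually has to be proved (and is the substance of the argument in \cite{bc}) is a lifting statement for stably trivial endomorphisms: for a small surjection $A'\to A$ in $\mathcal{C}$ and a lift $M'$ of $V$ over $A'$ with reduction $M$ over $A$, every $AG$-endomorphism of $M$ whose reduction to $V$ factors through a projective $kG$-module lifts to an $A'G$-endomorphism of $M'$ factoring through a projective $A'G$-module. This uses that projective $kG$-modules lift uniquely to projective $A'G$-modules and that the maps $\mathrm{Hom}_{A'G}(M',Q')\to\mathrm{Hom}_{AG}(M,Q)$ and $\mathrm{Hom}_{A'G}(Q',M')\to\mathrm{Hom}_{AG}(Q,M)$ are surjective because $M'$ is $A'$-free and $Q'$ is projective. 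Together with $\underline{\mathrm{End}}_{kG}(V)=k$ this shows that the group of automorphisms of a lift reducing to the identity on $V$ surjects under small extensions, which is the criterion guaranteeing that the versal hull represents the functor; a short reduction also handles projective summands of $V$. Without this step your proof establishes only versality. (A minor further point: Schlessinger's theorem gives pro-representability over Artinian rings, and one should invoke continuity of $F_V$ to pass to all of $\mathcal{C}$.)
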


\begin{lemma} 
\label{lem:defhelp}
{\rm \cite[Cors. 2.5 and 2.8]{bc}}
Let $V$ be a finitely generated $kG$-module with stable endomorphism ring 
$\underline{\mathrm{End}}_{kG}(V)=k$.
\begin{enumerate}
\item[i.] Then $\underline{\mathrm{End}}_{kG}(\Omega(V))=k$, and $R(G,V)$ and $R(G,\Omega(V))$ 
are isomorphic.
\item[ii.] There is a non-projective indecomposable $kG$-module $V_0$ $($unique up to
iso\-mor\-phism$)$ such that $\underline{\mathrm{End}}_{kG}(V_0)=k$, $V$ is isomorphic to 
$V_0\oplus P$ for some projective $kG$-module $P$, and $R(G,V)$ and $R(G,V_0)$ are 
isomorphic.
\end{enumerate}
\end{lemma}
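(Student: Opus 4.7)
The plan is to prove both parts by establishing natural isomorphisms of deformation functors $F_V$ and then invoking uniqueness of representing objects, together with Proposition \ref{prop:stablend}, which guarantees that the relevant universal deformation rings exist.

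For part (i), since $\Omega$ is an auto-equivalence of the stable module category of $kG$, I would first conclude immediately that $\underline{\mathrm{End}}_{kG}(\Omega(V))\cong\underline{\mathrm{End}}_{kG}(V)=k$. To identify the deformation rings, the key observation is that for any $R$ in $\mathcal{C}$, the group ring $RG$ is a symmetric $R$-order and projective covers lift well: given a lift $M$ of $V$ over $R$, a projective $RG$-cover $\pi_M\colon P(M)\to M$ reduces modulo the maximal ideal $\mathfrak{m}_R$ to a projective cover $P(V)\to V$, so the kernel $\Omega_{RG}(M):=\ker(\pi_M)$ is $R$-free and lifts $\Omega(V)$. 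I would verify that this assignment descends to a well-defined natural transformation $F_V\to F_{\Omega(V)}$. An inverse transformation comes from embedding a lift $N$ of $\Omega(V)$ into an $RG$-projective envelope (which exists because $RG$ is self-injective by symmetry) and taking the cokernel. Checking that the two transformations are mutually inverse then yields $R(G,V)\cong R(G,\Omega(V))$ by Yoneda.

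For part (ii), existence and uniqueness of the non-projective indecomposable summand $V_0$ follow from the Krull--Schmidt theorem applied to $V$, and since projective summands vanish in the stable category, $\underline{\mathrm{End}}_{kG}(V_0)=\underline{\mathrm{End}}_{kG}(V)=k$. To compare deformation rings, I would invoke the classical lifting of idempotents from $kG$ to $RG$ over a complete local base: the idempotent $e\in\mathrm{End}_{kG}(V)$ projecting onto the projective summand $P$ lifts to an idempotent in $\mathrm{End}_{RG}(M)$ for every lift $M$ of $V$, producing a decomposition $M=M_0\oplus Q$ in which $M_0$ lifts $V_0$ and $Q$ is projective, hence in fact free since its reduction $P$ is free. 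Conversely, summing any lift of $V_0$ with a free $RG$-module of the appropriate rank recovers a lift of $V$, and these two operations are mutually inverse on isomorphism classes, yielding a natural isomorphism $F_{V_0}\cong F_V$ and therefore $R(G,V)\cong R(G,V_0)$.

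The main obstacle is the careful verification that the $\Omega_{RG}$ construction in part (i) is genuinely well-defined on isomorphism classes of deformations: two choices of projective cover of $M$ differ by an automorphism of $P(M)$, and one must check that the induced map on kernels respects the chosen identification of $k\otimes_R\Omega_{RG}(M)$ with $\Omega(V)$, so that the resulting class in $\mathrm{Def}_G(\Omega(V),R)$ is independent of choices. Once naturality in $R$ is nailed down, compatibility with the $W$-algebra structures on $R(G,V)$ and $R(G,\Omega(V))$ is formal from the Yoneda lemma.
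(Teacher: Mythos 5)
First, note that the paper itself gives no proof of this lemma: it is quoted verbatim from \cite[Cors.\ 2.5 and 2.8]{bc}, and your outline (take syzygies of lifts via projective covers over $RG$ for (i), split projective summands off lifts for (ii), then conclude by Yoneda) is essentially the argument of that reference, so the overall route is the right one. However, several of your justifications are not correct as stated and need repair. For (i), $RG$ is \emph{not} self-injective for a general $R$ in $\mathcal{C}$ (e.g.\ $R=W$); what is true, because $RG$ is a symmetric $R$-algebra, is that projective $RG$-lattices are relatively injective for $R$-split embeddings, and the clean way to get your inverse operation is to set $\Omega_R^{-1}(N)=\mathrm{Hom}_R\bigl(\Omega_R(\mathrm{Hom}_R(N,R)),R\bigr)$, which also guarantees that the cokernel you want is $R$-free. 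For (ii), "classical lifting of idempotents from $kG$ to $RG$" does not apply: idempotents of $\mathrm{End}_{kG}(V)$ can only be lifted through the reduction map $\mathrm{End}_{RG}(M)\to\mathrm{End}_{kG}(k\otimes_RM)$ if they lie in its image, and this map is not surjective for arbitrary lattices (already for $G=\mathbb{Z}/2$, $M=W_+\oplus W_-$ the image is a proper subalgebra). What saves the argument is special to your situation: for a \emph{projective} $RG$-lattice $Q$ the map $\mathrm{Hom}_{RG}(M,Q)\to\mathrm{Hom}_{kG}(k\otimes_RM,\,k\otimes_RQ)$ is surjective (reduce to $Q=RG$ and use $\mathrm{Hom}_{RG}(M,RG)\cong\mathrm{Hom}_R(M,R)$, i.e.\ symmetry again), and from a lift of the projection $V\to P$ one builds the splitting $M\cong M_0\oplus Q$. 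Alternatively, (ii) follows from (i) with no idempotent argument at all, since $\Omega(V)\cong\Omega(V_0)$ and hence $R(G,V)\cong R(G,\Omega(V))=R(G,\Omega(V_0))\cong R(G,V_0)$.

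Two further points. Krull--Schmidt alone does not give the uniqueness claim in (ii): it does not rule out two non-projective indecomposable summands; you must use that $\underline{\mathrm{End}}_{kG}(V)=k$ has no nontrivial idempotents (and is nonzero), so exactly one indecomposable summand is non-projective. Also $P$, and hence $Q$, is only projective, not free, for a general finite group $G$ (freeness holds in the paper's application only because $D$ is a $2$-group), so "free of the appropriate rank" should read "the projective $RG$-lattice lifting $P$". Finally, the well-definedness issue you flag in (i) is settled by the same hom-lifting fact as above: since $\underline{\mathrm{End}}_{kG}(\Omega(V))=k$, every automorphism of $\Omega(V)$ is a nonzero scalar plus a map factoring through a projective, and both pieces lift to endomorphisms of $\Omega_R(M)$, so changing the identification $k\otimes_R\Omega_R(M)\cong\Omega(V)$ does not change the deformation class; this is exactly where the hypothesis on the stable endomorphism ring enters, and without it the construction would only be well defined on weak deformations.
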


\begin{lemma}
\label{lem:Wlift}
{\rm \cite[Lemma 2.3.2]{3sim}}
Let $V$ be a finitely generated $kG$-module such that there is a non-split short exact sequence of 
$kG$-modules
$$0\to Y_2\to V\to Y_1\to 0$$
with $\mathrm{Ext}^1_{kG}(Y_1,Y_2)=k$.
Suppose that  there exists a $WG$-module $X_i$ which is a lift of $Y_i$ over $W$ for $i=1,2$. 
Suppose further that 
$$\mathrm{dim}_F\;\mathrm{Hom}_{FG}(F\otimes_WX_1,F\otimes_WX_2) =
\mathrm{dim}_k\;\mathrm{Hom}_{kG}(Y_1,Y_2)-1.$$
Then there exists a $WG$-module $X$ which is a lift of $V$ over $W$.
\end{lemma}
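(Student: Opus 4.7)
The plan is to realize $V$ as the mod-$p$ reduction of a $WG$-extension $0 \to X_2 \to X \to X_1 \to 0$. Any such $X$ is automatically $W$-free as an extension of $W$-free modules, and tensoring with $k$ (which is valid since $\mathrm{Tor}_1^W(X_1,k)=0$) recovers an extension of $Y_1$ by $Y_2$. So the essential task is to show that the reduction-mod-$p$ map
$$\mathrm{Ext}^1_{WG}(X_1,X_2) \longrightarrow \mathrm{Ext}^1_{kG}(Y_1,Y_2)\cong k$$
hits the nonzero class $[V]$; any preimage will then yield the desired lift.

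To analyze this map, I would apply $\mathrm{Hom}_{WG}(X_1,-)$ to the short exact sequence $0 \to X_2 \xrightarrow{p} X_2 \to Y_2 \to 0$ and read off the long exact $\mathrm{Ext}$-sequence. Since $X_1$ is $W$-free, a finitely generated projective $WG$-resolution of $X_1$ consists of $W$-free modules and reduces modulo $p$ to a projective $kG$-resolution of $Y_1$; computing through it gives the identifications $\mathrm{Hom}_{WG}(X_1,Y_2)=\mathrm{Hom}_{kG}(Y_1,Y_2)$ and $\mathrm{Ext}^1_{WG}(X_1,Y_2)=\mathrm{Ext}^1_{kG}(Y_1,Y_2)$. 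Writing $H=\mathrm{Hom}_{WG}(X_1,X_2)$ and $E=\mathrm{Ext}^1_{WG}(X_1,X_2)$, the long exact sequence then breaks into
$$0 \to H/pH \to \mathrm{Hom}_{kG}(Y_1,Y_2) \to E[p] \to 0, \qquad 0 \to E/pE \to \mathrm{Ext}^1_{kG}(Y_1,Y_2) \to \mathrm{Ext}^2_{WG}(X_1,X_2)[p] \to 0,$$
where $E[p]$ denotes the $p$-torsion.

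Next I would pin down $\dim_k H/pH$. The module $H$ is a $W$-submodule of the free module $\mathrm{Hom}_W(X_1,X_2)$, hence finitely generated and torsion-free, hence $W$-free; flat base change along $W\hookrightarrow F$ yields $F\otimes_W H\cong \mathrm{Hom}_{FG}(F\otimes_WX_1,F\otimes_WX_2)$. The dimension hypothesis therefore gives $\dim_k H/pH = \dim_k\mathrm{Hom}_{kG}(Y_1,Y_2)-1$, and the first short exact sequence above forces $\dim_k E[p]=1$. In particular $E\neq 0$, and since $E$ is finitely generated over the local ring $W$, Nakayama's lemma gives $E/pE\neq 0$. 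Combined with $\dim_k\mathrm{Ext}^1_{kG}(Y_1,Y_2)=1$, the second short exact sequence then shows that $E/pE\to\mathrm{Ext}^1_{kG}(Y_1,Y_2)$ is an isomorphism, so $[V]$ lies in the image.

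Choosing any preimage in $E$ produces the desired extension $X$. The step I expect to require the most care is verifying that the composite $E\to\mathrm{Ext}^1_{WG}(X_1,Y_2)\to \mathrm{Ext}^1_{kG}(Y_1,Y_2)$ actually is reduction mod $p$ on extension classes (so that lifting the class of $V$ genuinely produces a $WG$-module reducing to $V$ as an extension, and not merely abstractly); one checks this by tracing an extension $X$ through the pushout along $X_2\twoheadrightarrow Y_2$ and then modding out by $p$, which one identifies with $X/pX$. Granting this, everything else collapses to the dimension count above.
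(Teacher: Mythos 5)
Your argument is correct: the Bockstein-type long exact sequence obtained by applying $\mathrm{Hom}_{WG}(X_1,-)$ to $0\to X_2\xrightarrow{p} X_2\to Y_2\to 0$, the identification $\mathrm{Ext}^i_{WG}(X_1,Y_2)\cong\mathrm{Ext}^i_{kG}(Y_1,Y_2)$ via a $W$-split resolution, and the rank count on the $W$-lattice $\mathrm{Hom}_{WG}(X_1,X_2)$ do force $E/pE\to\mathrm{Ext}^1_{kG}(Y_1,Y_2)\cong k$ to be an isomorphism, and the compatibility you flag (pushout along $X_2\twoheadrightarrow Y_2$ followed by reduction is $X\mapsto X/pX$, which is exact since $X_1$ is $W$-free) indeed holds, so a preimage of the class of $V$ gives a $W$-free $WG$-module $X$ with $k\otimes_W X\cong V$. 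The present paper states this lemma without proof, quoting it from \cite{3sim}, and your lifting-of-extension-classes argument is essentially the standard one given there, so nothing further is needed.
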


%%%%%%%%%%%%%%%%%%%%%%%%%%%%%%%%%%%%%%%%%%%%%%%%%%%%%%%%%
%% The dihedral $2$-groups $D$
%%%%%%%%%%%%%%%%%%%%%%%%%%%%%%%%%%%%%%%%%%%%%%%%%%%%%%%%%

\section{The dihedral $2$-groups $D$}
\label{s:dihedral}
\setcounter{equation}{0}

Let $d\ge 3$ and let $D$ be a dihedral group of order $2^d$ given as
$$D=\langle \sigma,\tau\;|\; \sigma^2=1=\tau^2, (\sigma\tau)^{2^{d-2}}= (\tau\sigma)^{
2^{d-2}}\rangle.$$
Let $k$ be an algebraically closed field of characteristic $p=2$.
The trivial simple $kD$-module $k$ is the unique irreducible $kD$-module up
to isomorphism. The free $kD$-module $kD$ of rank one is indecomposable and its radical
series has length $2^{d-1}+1$. The radical of $kD$ is generated as a $kD$-module
by $(1+\sigma)$ and by $(1+\tau)$, and the socle of $kD$ is one-dimensional over $k$ and generated
by $[(1+\sigma)(1+\tau)]^{2^{d-2}}=[(1+\tau)(1+\sigma)]^{2^{d-2}}$. Hence
\begin{eqnarray}
\label{eq:radsoc}
\mathrm{rad}(kD)&=&kD(1+\sigma) + kD(1+\tau),\\
\mathrm{soc}(kD) &=& kD(1+\sigma) \cap kD(1+\tau) \;= \;k\,[(1+\sigma)(1+\tau)]^{2^{d-2}}.\nonumber
\end{eqnarray}
From this description it follows that $\mathrm{rad}(kD)/\mathrm{soc}(kD)$ is isomorphic to the
direct sum of two indecomposable $kD$-modules, namely
\begin{equation}
\label{eq:heart}
\mathrm{rad}(kD)/\mathrm{soc}(kD)\cong kD(1+\sigma)/\mathrm{soc}(kD) \;\oplus\; kD(1+\tau)/
\mathrm{soc}(kD).
\end{equation}
Moreover, we have the following isomorphisms of $kD$-modules:
\begin{eqnarray}
\label{eq:important1}
kD(1+\sigma) &\cong & kD\otimes_{k\langle\sigma\rangle} k \;=\;\mathrm{Ind}_{\langle\sigma\rangle}^D\,
k,\\ 
kD(1+\tau) &\cong & kD\otimes_{k\langle\tau\rangle} k \;=\;\mathrm{Ind}_{\langle\tau\rangle}^D\,k.
\nonumber
\end{eqnarray}
Let $\nu\in\{\sigma,\tau\}$, and define $E_\nu=kD(1+\nu)/\mathrm{soc}(kD)$. We have a
commutative diagram of $kD$-modules of the form
\begin{equation}
\label{eq:important2}
\xymatrix{
0\ar[r]&\Omega(E_\nu)\ar[r]^{\iota_\nu}\ar[d]_{\Omega(f_\nu)}&kD\ar[r]^{\pi_\nu}\ar[d]^{g_\nu}&
E_\nu\ar[r]\ar[d]^{f_\nu}&0 \\
0\ar[r]&\mathrm{soc}(kD)\ar[r]^(.55)\iota&kD\ar[r]^(.35)\pi& kD/\mathrm{soc}(kD)\ar[r]&0}
\end{equation}
where $\pi_\nu(1)=(1+\nu)+\mathrm{soc}(kD)$, $\pi(1)=1+\mathrm{soc}(kD)$, 
$\iota_\nu$ and $\iota$ are inclusions, 
$g_\nu(1)=(1+\nu)$ and $f_\nu$ is induced by the inclusion map 
$kD(1+\nu)\hookrightarrow kD$.
Since $f_\nu$ is injective, it follows  that 
\begin{equation}
\label{eq:important3}
\mathrm{Ker}(\Omega(f_\nu))\cong \mathrm{Ker}(g_\nu)=kD(1+\nu).
\end{equation}

We now turn to representations of $D$ in characteristic $0$.
Let $W$ be the ring of infinite Witt vectors over $k$, and let $F$ be the fraction field of $W$.
Let $\zeta$ be a fixed primitive $2^{d-1}$-th root of unity in 
an algebraic closure of $F$. 
Then $D$ 
has $4+(2^{d-2}-1)$ 
ordinary irreducible characters $\chi_1,\chi_2, \chi_3,\chi_4,\chi_{5,i}$, $1\le i\le 2^{d-2}-1$,
whose representations $\psi_1,\psi_2, \psi_3,\psi_4,\psi_{5,i}$, $1\le i\le 2^{d-2}-1$,
are described in Table \ref{tab:chartable}.
\begin{table}[ht] 
\caption{\label{tab:chartable}The ordinary irreducible representations of $D$.}
$$\begin{array}{|c||c|c|}
\hline
 & \;\;\,\sigma&\;\;\,\tau\\\hline
\hline
\psi_1=\chi_1&\;\;\,1&\;\;\,1\\\hline
\psi_2=\chi_2 &-1&-1\\\hline
\psi_3=\chi_3 &\;\;\,1&-1\\\hline
\psi_4=\chi_4 &-1&\;\;\,1\\\hline
\begin{array}{c}\psi_{5,i}\\(1\le i\le 2^{d-2}-1)\end{array}
 &  \left(\begin{array}{cc} 0 & 1 \\ 1&0\end{array}
\right) &\left(\begin{array}{cc}0&\zeta^{-i} \\  \zeta^{i}&0\end{array}
\right)\\\hline
\end{array}$$
\end{table}
In fact, the splitting field of $D$ is $F(\zeta+\zeta^{-1})$, and the action of the Galois group
$\mathrm{Gal}(F(\zeta+\zeta^{-1})/F)$ on the ordinary irreducible characters divides the 
characters $\chi_{5,i}$, $i=1,\ldots,2^{d-2}-1$,
into $d-2$ Galois orbits $\mathcal{O}_0,\ldots, \mathcal{O}_{d-3}$ with
$\mathcal{O}_{\ell}=\{ \chi_{5,2^{d-3-\ell}(2u-1)} \;|\; 1\le u\le 2^{\ell}\}$ for $0\le\ell\le d-3$. 
Since the Schur index over $F$ of each of the characters in these orbits is $1$,
we obtain $d-2$ non-isomorphic simple $FD$-modules $V_0,\ldots,V_{d-3}$
whose characters $\rho_0,\ldots, \rho_{d-3}$ satisfy
\begin{equation}
\label{eq:goodchar1}
\rho_\ell =\sum_{u=1}^{2^{\ell}} \chi_{5,2^{d-3-\ell}(2u-1)} 
\qquad\mbox{for $0\le \ell \le d-3$.}
\end{equation}
Moreover,
\begin{equation}
\label{eq:goodendos1}
\mathrm{End}_{FD}(V_\ell)\cong 
F\left(\zeta^{2^{d-3-\ell}}+\zeta^{-2^{d-3-\ell}}\right)
\qquad \mbox{for $0\le \ell \le d-3$.}
\end{equation}

\begin{lemma}
\label{lem:maxliftsW}
Let 
\begin{itemize}
\item[i.] $(a,b)\in\{(1,3), (2,4)\}$, or
\item[ii.] $(a,b)\in\{(1,4),(2,3)\}$.
\end{itemize}
Let $N_{a,b}$ be an $FD$-module with character 
$\chi_a+\chi_b+\sum_{\ell=0}^{d-3}\rho_\ell$. Then there is a full $WD$-lattice $L_{a,b}$ in $N_{a,b}$
such that $L_{a,b}/2L_{a,b}$ is isomorphic to the 
$kD$-module $\mathrm{Ind}_{\langle\sigma\rangle}^D\,k$ in case $(i)$ and to the $kD$-module 
$\mathrm{Ind}_{\langle\tau\rangle}^D\,k$ in case $(ii)$.
\end{lemma}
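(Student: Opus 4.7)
The plan is to construct each lattice $L_{a,b}$ explicitly, either as an induced module $\mathrm{Ind}_{\langle\nu\rangle}^D W$ from a subgroup $\langle\nu\rangle$ of order $2$, or as such an induction twisted by the one-dimensional sign representation affording $\chi_2$, and then to verify the character over $F$ and the reduction modulo $2$ by direct calculation.

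Concretely, I would set $L_{1,3} := WD\otimes_{W\langle\sigma\rangle}W$ and $L_{1,4} := WD\otimes_{W\langle\tau\rangle}W$, where $W$ is the trivial module over the indicated subgroup. Let $W_{\chi_2}$ denote the rank-one $WD$-module on which both $\sigma$ and $\tau$ act as $-1$, and put $L_{2,4} := L_{1,3}\otimes_W W_{\chi_2}$ and $L_{2,3} := L_{1,4}\otimes_W W_{\chi_2}$. Each $L_{a,b}$ is $W$-free of rank $|D|/2 = 2^{d-1}$, hence a full $WD$-lattice in $F\otimes_W L_{a,b}$. By Frobenius reciprocity, the multiplicity of $\chi_j$ in the character of $F\otimes_W L_{1,3}$ equals $\langle \chi_j|_{\langle\sigma\rangle},\mathbf{1}\rangle_{\langle\sigma\rangle}$, which by inspection of Table \ref{tab:chartable} is $1$ for $j \in \{1,3\}$ and for every index $(5,i)$, and $0$ otherwise. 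Since the map $(\ell,u)\mapsto 2^{d-3-\ell}(2u-1)$ is a bijection from $\{(\ell,u):0\le\ell\le d-3,\ 1\le u\le 2^\ell\}$ onto $\{1,\ldots,2^{d-2}-1\}$ (via the unique factorization of a positive integer as a power of $2$ times an odd number), formula (\ref{eq:goodchar1}) yields $\sum_{i}\chi_{5,i} = \sum_{\ell}\rho_\ell$. Thus $F\otimes_W L_{1,3}$ has the character of $N_{1,3}$ and, by semisimplicity of $FD$, is isomorphic to $N_{1,3}$; the case of $L_{1,4}$ is symmetric.

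For the twisted lattices I would use the multiplicative identities $\chi_2\chi_1 = \chi_2$, $\chi_2\chi_3 = \chi_4$, $\chi_2\chi_4 = \chi_3$, together with the key identity $\chi_2\chi_{5,i} = \chi_{5,i}$. The last follows because $\chi_2$ is trivial on $\langle\sigma\tau\rangle$ (since $\chi_2(\sigma\tau) = (-1)(-1) = 1$) and $\chi_{5,i}$ vanishes on every reflection, so the two class functions agree on each conjugacy class of $D$. This gives $F\otimes_W L_{2,4}\cong N_{2,4}$ and $F\otimes_W L_{2,3}\cong N_{2,3}$.

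For the reductions, compatibility of induction with base change yields $L_{1,3}/2L_{1,3}\cong \mathrm{Ind}_{\langle\sigma\rangle}^D k$ and $L_{1,4}/2L_{1,4}\cong \mathrm{Ind}_{\langle\tau\rangle}^D k$. The module $W_{\chi_2}/2W_{\chi_2}$ is a rank-one $kD$-module on which $\sigma$ and $\tau$ act as $-1 = 1$ in characteristic $2$, so it is the trivial $kD$-module $k$; tensoring by it therefore leaves the reduction unchanged. Hence $L_{2,4}/2L_{2,4}\cong \mathrm{Ind}_{\langle\sigma\rangle}^D k$ and $L_{2,3}/2L_{2,3}\cong \mathrm{Ind}_{\langle\tau\rangle}^D k$, completing the verification. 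The argument is largely routine; the only step requiring a moment's thought is the identity $\chi_2\chi_{5,i} = \chi_{5,i}$, and the main point of the construction is that the two-element twist by $\chi_2$ is invisible modulo $2$, which is exactly what allows a single $kD$-module to be lifted to four distinct $WD$-lattices with different characters.
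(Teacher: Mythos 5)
Your proof is correct. It shares its starting point with the paper: for the characters $\chi_1+\chi_3+\sum_\ell\rho_\ell$ and $\chi_1+\chi_4+\sum_\ell\rho_\ell$ the paper likewise takes the permutation lattice $W X_\nu\cong WD\otimes_{W\langle\nu\rangle}W$ on the cosets of $\langle\nu\rangle$, $\nu\in\{\sigma,\tau\}$, whose reduction is $\mathrm{Ind}_{\langle\nu\rangle}^D k$. Where you diverge is in producing the complementary characters $\chi_2+\chi_4+\sum_\ell\rho_\ell$ and $\chi_2+\chi_3+\sum_\ell\rho_\ell$: the paper uses the kernel $WD(1-\nu)$ of the natural surjection $WD\to W X_\nu$, reads off its character by subtracting the permutation character from the regular character $\chi_1+\chi_2+\chi_3+\chi_4+2\sum_\ell\rho_\ell$, and uses that in characteristic $2$ the reduction $kD(1-\nu)$ equals $kD(1+\nu)\cong\mathrm{Ind}_{\langle\nu\rangle}^D k$; you instead twist the induced lattice by the rank-one lift of $\chi_2$, checking the character shift via $\chi_2\chi_{5,i}=\chi_{5,i}$ and observing that the twist is invisible modulo $2$. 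In fact the two constructions yield isomorphic lattices, since by the projection formula $\bigl(WD\otimes_{W\langle\nu\rangle}W\bigr)\otimes_W W_{\chi_2}\cong WD\otimes_{W\langle\nu\rangle}W_{-}\cong WD(1-\nu)$, where $W_{-}$ is the rank-one $W\langle\nu\rangle$-module with $\nu$ acting as $-1$; so the difference is in the verification rather than the substance. Your route makes the key mechanism more transparent (a sign twist that dies modulo $2$ permutes the four characters of degree one while fixing the $\rho_\ell$), at the cost of the explicit check $\chi_2\chi_{5,i}=\chi_{5,i}$, whereas the paper's exact-sequence bookkeeping needs no character multiplication at all. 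One small point you leave implicit (as does the paper): having built an abstract lattice whose $F$-span has the character of $N_{a,b}$, you should transport it into $N_{a,b}$ along an $FD$-isomorphism, which exists because in characteristic $0$ the character determines the module; this is routine.
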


\begin{proof}
Let $X_\sigma$ be the set of left cosets of $\langle \sigma \rangle$ in $D$,
and let $W X_\sigma$ be the permutation module of $D$ over $W$ corresponding to $X_\sigma$.
This means that $W X_\sigma$ is a free $W$-module with basis $\{m_x\;|\;x\in X_\sigma\}$ and $g\in D$
acts as $g\cdot m_x=m_{gx}$ for all $x\in X_\sigma$. Using the formula for the permutation character
associated to the $FD$-module $F X_\sigma$, we see that this character is equal to
$\chi_1+\chi_3+\sum_{\ell=0}^{d-3}\rho_\ell$. 

There is a surjective $WD$-module homomorphism
$h_\sigma: WD \to W X_\sigma$ which is defined by $h_\sigma(1)=m_{\langle \sigma\rangle}$.
Then $\mathrm{Ker}(h_\sigma)=WD(1-\sigma)$ and we have a short exact sequence of 
$WD$-modules which are free over $W$
\begin{equation}
\label{eq:easy1}
0\to WD(1-\sigma) \to WD \xrightarrow{h_\sigma} W X_\sigma \to 0.
\end{equation}
Because the character of $FD$ is $\chi_1+\chi_2+\chi_3+\chi_4+2\,\sum_{\ell=0}^{d-3}\rho_\ell$,
the character of $FD(1-\sigma)$ must be 
$\chi_2+\chi_4+\sum_{\ell=0}^{d-3}\rho_\ell$.
Tensoring $(\ref{eq:easy1})$ with $k$ over $W$, we obtain a short exact sequence of $kD$-modules
\begin{equation}
\label{eq:easy2}
0\to kD(1-\sigma) \to kD \to k X_\sigma \to 0.
\end{equation} 
Since $k X_\sigma\cong \mathrm{Ind}_{\langle\sigma\rangle}^D\,k$ and
the latter is isomorphic to $kD(1+\sigma)$ by $(\ref{eq:important1})$, 
Lemma \ref{lem:maxliftsW} follows in case (i). Case (ii) is proved using the set $X_\tau$
of left cosets of $\langle\tau\rangle$ in $D$ instead.
\end{proof}

%%%%%%%%%%%%%%%%%%%%%%%%%%%%%%%%%%%%%%%%%%%%%%%%%%%%%%%%%
%% Endo-trivial modules for $D$ in characteristic $2$
%%%%%%%%%%%%%%%%%%%%%%%%%%%%%%%%%%%%%%%%%%%%%%%%%%%%%%%%%

\section{Endo-trivial modules for $D$ in characteristic $2$}
\label{s:endotrivial}
\setcounter{equation}{0}

As before, let $k$ be an algebraically closed field of characteristic $2$. Since $D$ is a
$2$-group, it follows from \cite{carl1} that the $kD$-modules $V$ with stable endomorphism ring 
$\underline{\mathrm{End}}_{kD}(V)\cong k$ are precisely the endo-trivial $kD$-modules, i.e. the 
$kD$-modules $V$ whose endomorphism ring over $k$, $\mathrm{End}_k(V)$, is as
$kD$-module stably isomorphic to the trivial $kD$-module $k$. The latter modules have been
completely classified in \cite{CT} (see also \cite{carl2}). 
We will use this description to determine 
the location of the indecomposable endo-trivial $kD$-modules in the stable Auslander-Reiten
quiver of $kD$.

\begin{rem}
\label{rem:allIneed}
Let $z=(\sigma\tau)^{2^{d-2}}$ be the involution in the center of $D$.
The poset of all elementary abelian subgroups of $D$ of rank at least 2 consists of two
conjugacy classes of maximal elementary abelian subgroups of rank exactly 2.
These conjugacy classes are
represented by $K_1=\langle \sigma, z\rangle$ and $K_2=\langle \tau,z\rangle$. 
Let $T(D)$ denote the group of equivalence classes of 
endo-trivial $kD$-modules as in \cite{CT}. Consider the map
\begin{equation}
\label{eq:Xi}
\Xi:T(D)\to \mathbb{Z}\times \mathbb{Z}
\end{equation}
defined by $\Xi([M])=(a_1,a_2)$ when $\mathrm{Res}^D_{K_i}M\cong
\Omega^{a_i}_{K_i}(k)\oplus F_{M,i}$ for some free $kK_i$-module $F_{M,i}$ for $i=1,2$. 
In particular, $\Xi([k])=(0,0)$ and 
$\Xi([\Omega^m(M)])=\Xi([M])+(m,m)$ for all endo-trivial $kD$-modules $M$ and 
all integers $m$. 
By \cite[Thm. 5.4]{CT}, $\Xi$ is injective and the image of $\Xi$ is generated by $(1,1)$ and 
$(1,-1)$ (and also by $(1,1)$ and $(-1,1)$). 

As in \S\ref{s:dihedral}, let $E_\sigma=kD(1+\sigma)/\mathrm{soc}(kD)$ and let
$E_\tau=kD(1+\tau)/\mathrm{soc}(kD)$.
By $(\ref{eq:heart})$,
$\mathrm{rad}(kD)/\mathrm{soc}(kD)\cong E_\sigma\oplus E_\tau$.
The almost split sequence ending in $\Omega^{-1}(k)=kD/\mathrm{soc}(kD)$ has thus the form
\begin{equation}
\label{eq:oh1}
0\to \Omega(k)\to kD\oplus E_\sigma\oplus E_\tau \xrightarrow{\mu_{-1}} \Omega^{-1}(k)\to 0
\end{equation}
where $\mu_{-1}|_{E_\nu}$ is the rightmost vertical homomorphism $f_\nu$ 
in the diagram $(\ref{eq:important2})$ and $\mu_{-1}|_{kD}$ is the
natural projection. 
It follows for example from \cite[Lemma 5.4]{AC} that $E_\sigma$ and $E_\tau$ are endo-trivial.
%As in \cite[\S6]{AC}, one sees that $\{\Xi([E_\sigma]),\Xi([E_\tau])\}=\{(1,-1),(-1,1)\}$.
Moreover, $\Xi([E_\sigma])=(1,-1)$ and $\Xi([E_\tau])=(-1,1)$.
In particular,  $T(D)$ is generated by $[\Omega(k)]$ and $[E_\sigma]$ (and
also by $[\Omega(k)]$ and $[E_\tau]$).

Let $\nu\in\{\sigma,\tau\}$, and define $A_{\nu,0}=k$ and $A_{\nu,1}=\Omega(E_\nu)$.
For $n\ge 2$, define $A_{\nu,n}$ to be the unique indecomposable $kD$-module, up to 
isomorphism, in the equivalence class of the endo-trivial $kD$-module 
$A_{\nu,1}\otimes_k A_{\nu,n-1}$.
Then the trivial simple $kD$-module $k=A_{\sigma,0}=A_{\tau,0}$ together with the $kD$-modules 
$A_{\sigma,n}$, $A_{\tau,n}$ for $n\ge 1$ give a complete set of representatives of the $\Omega$-orbits 
of the indecomposable endo-trivial $kD$-modules. We have
$\Xi([A_{\sigma,n}])=(2n,0)$ and $\Xi([A_{\tau,n}])=(0,2n)$ for all $n\ge 0$.
\end{rem}

\begin{lemma}
\label{lem:endotrivial}
The finitely generated indecomposable endo-trivial  $kD$-modules are exactly the modules in the 
two components of the stable Auslander-Reiten quiver of $kD$ containing the trivial simple 
$kD$-module $k$ and $\Omega(k)$.

More precisely, 
let $A_{\sigma,n}$ and $A_{\tau,n}$ be as in Remark $\ref{rem:allIneed}$ for $n\ge 0$.
Then the almost split sequence ending in $k$ has the form
\begin{equation}
\label{eq:assk}
0\to \Omega^2(k)\to A_{\sigma,1}\oplus A_{\tau,1}\xrightarrow{\mu_1} k\to 0.
\end{equation}
Let $\nu\in\{\sigma,\tau\}$ and let $n\ge 1$. Then the
almost split sequence ending in $A_{\nu,n}$ has the form
\begin{equation}
\label{eq:assgeneral}
0\to \Omega^2(A_{\nu,n}) \to A_{\nu,n+1}\oplus \Omega^2(A_{\nu,n-1}) \xrightarrow{\mu_{\nu,n+1}} 
A_{\nu,n}\to 0.
\end{equation}
\end{lemma}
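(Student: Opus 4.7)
\smallskip
\noindent\emph{Sketch of plan.}
The plan is to first derive the two explicit almost split sequences $(\ref{eq:assk})$ and $(\ref{eq:assgeneral})$, and then combine them with the enumeration of $\Omega$-orbits of indecomposable endo-trivial modules from Remark $\ref{rem:allIneed}$ to describe the two relevant components of the stable Auslander-Reiten quiver.

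First I would establish $(\ref{eq:assk})$ by applying the syzygy functor $\Omega$ to the almost split sequence $(\ref{eq:oh1})$ ending in $\Omega^{-1}(k)$. Since $\Omega$ is a self-equivalence of the stable module category that carries almost split sequences to almost split sequences, and since $\Omega(kD)=0$ in the stable category, this produces a short exact sequence
$$0\to\Omega^2(k)\to\Omega(E_\sigma)\oplus\Omega(E_\tau)\oplus P\to k\to 0$$
for some projective $kD$-module $P$. Using $A_{\nu,1}=\Omega(E_\nu)$, $\dim E_\nu=2^{d-1}-1$ so $\dim\Omega(E_\nu)=2^{d-1}+1$, together with $\dim\Omega^2(k)=2^d+1$ (which follows from $(\ref{eq:heart})$: the head of $\Omega(k)=\mathrm{rad}(kD)$ is $k\oplus k$, so its projective cover is $kD\oplus kD$), a direct dimension count forces $P=0$.

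Next I would deduce $(\ref{eq:assgeneral})$ by tensoring $(\ref{eq:assk})$ over $k$ with $A_{\nu,n}$. Because $A_{\nu,n}$ is endo-trivial, $-\otimes_k A_{\nu,n}$ is a self-equivalence of the stable module category and therefore sends the stable class of $(\ref{eq:assk})$ to the stable class of the almost split sequence ending in $A_{\nu,n}$. Taking $\nu=\sigma$ (the case $\nu=\tau$ is symmetric) and using the injectivity of the invariant $\Xi$ from Remark $\ref{rem:allIneed}$, one identifies stably $A_{\sigma,1}\otimes_k A_{\sigma,n}\cong A_{\sigma,n+1}$ and $A_{\tau,1}\otimes_k A_{\sigma,n}\cong\Omega^2(A_{\sigma,n-1})$, since
$$\Xi([A_{\sigma,1}\otimes A_{\sigma,n}])=(2(n+1),0)=\Xi([A_{\sigma,n+1}]),\quad \Xi([A_{\tau,1}\otimes A_{\sigma,n}])=(2n,2)=\Xi([\Omega^2(A_{\sigma,n-1})]).$$
To pass from this stable identification to the almost split sequence in the module category, I would invoke the standard fact for self-injective algebras that an indecomposable projective $Q$ occurs as a summand of the middle term of the almost split sequence ending in a non-projective indecomposable $C$ only if $C\cong Q/\mathrm{soc}(Q)$. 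In our setting $Q=kD$ and $Q/\mathrm{soc}(Q)\cong\Omega^{-1}(k)$ has $\Xi$-class $(-1,-1)$, distinct from $\Xi([A_{\nu,n}])\in\{(2n,0),(0,2n)\}$ for every $n\ge 1$, so no such summand appears.

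Finally, iterating $(\ref{eq:assk})$ and $(\ref{eq:assgeneral})$ under the AR-translation $\tau=\Omega^2$ shows that the component containing $k$ has vertex set $\{\Omega^{2m}(k),\,\Omega^{2m}(A_{\sigma,n}),\,\Omega^{2m}(A_{\tau,n}):m\in\mathbb{Z},\,n\ge 1\}$, while the component containing $\Omega(k)$ is obtained by an additional $\Omega$-shift. These two vertex sets are disjoint because their $\Xi$-images have opposite parities in $\mathbb{Z}\times\mathbb{Z}$, and by Remark $\ref{rem:allIneed}$ their union exactly exhausts the $\Omega$-orbits of indecomposable endo-trivial $kD$-modules, giving assertion (i). The main obstacle I anticipate lies in the step involving the self-injective criterion for projective summands in $(\ref{eq:assgeneral})$; the dimension count for $(\ref{eq:assk})$ is transparent, but for $(\ref{eq:assgeneral})$ I would prefer to rely on this criterion rather than attempt closed-form dimensions for $A_{\sigma,n}$ and $A_{\tau,n}$.
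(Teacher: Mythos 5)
Your argument is correct, and its overall architecture coincides with the paper's: apply $\Omega$ to the sequence $(\ref{eq:oh1})$ to get $(\ref{eq:assk})$, obtain the middle term of the sequence ending in $A_{\nu,n}$ by tensoring with $A_{\nu,n}$, identify $A_{\nu',1}\otimes_k A_{\nu,n}$ with $\Omega^2(A_{\nu,n-1})$ via the injective invariant $\Xi$, and rule out projective middle summands. The two local differences are these. For $(\ref{eq:assk})$ you exclude the projective summand $P$ by a dimension count (using $\dim_k\Omega^2(k)=2^d+1$, $\dim_k A_{\nu,1}=2^{d-1}+1$), whereas the paper invokes the fact that $(\ref{eq:oh1})$ is the only almost split sequence with $kD$ in its middle term; both are fine, and you in fact use the paper's criterion yourself (in the form ``$kD$ occurs only in the sequence ending in $kD/\mathrm{soc}(kD)\cong\Omega^{-1}(k)$'') at the later step, so the dimension count is redundant but harmless. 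For $(\ref{eq:assgeneral})$ the paper cites Auslander--Carlson \cite[Thm.~3.6, Cor.~4.7]{AC}, which says directly that tensoring $(\ref{eq:assk})$ with any odd-dimensional indecomposable module yields the almost split sequence modulo projectives; you instead re-derive the needed special case by observing that $-\otimes_k A_{\nu,n}$ is a self-equivalence of the stable module category and that such equivalences preserve almost split data. That route works, but note it is only literally true at the level of Auslander--Reiten triangles in the stable category (or ``sequences modulo projective middle summands''), since a stable equivalence cannot see the projective part of the middle term; you implicitly acknowledge this by handling the projective summands separately with the self-injective criterion, which closes the gap. Your final paragraph assembling the two components and matching them against the list of $\Omega$-orbits in Remark \ref{rem:allIneed} is exactly the (mostly implicit) content of the paper's part (i).
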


\begin{proof}
Since $\Omega$ defines an equivalence of the stable module category of finitely generated
$kD$-modules with itself, we can apply $\Omega$ to the almost split sequence
$(\ref{eq:oh1})$ to obtain the
almost split sequence ending in $k$ up to free direct summands of the middle term.
Since the sequence $(\ref{eq:oh1})$ is the only almost split sequence having $kD$ as 
a summand of the middle term,
% (see for example \cite[Prop. 4.12.7]{benson}), 
the almost split sequence ending in $k$  is as in $(\ref{eq:assk})$.

Given an indecomposable $kD$-module $M$ of odd $k$-dimension, it follows from
\cite[Thm. 3.6 and Cor. 4.7]{AC} that 
\begin{equation}
\label{eq:oh3}
0\to \Omega^2(k)\otimes_k M\to  (A_{\sigma,1}\otimes_k M)\oplus (A_{\tau,1}\otimes_k M) \to M\to 0
\end{equation}
is the almost split sequence ending in $M$ modulo projective direct summands.
Since all endo-trivial $kD$-modules have odd $k$-dimension,
we can apply the sequence $(\ref{eq:oh3})$ to $M=A_{\nu,n}$ for $\nu\in\{\sigma,\tau\}$ and
all $n\ge 1$. This means that modulo free direct summands the almost split sequence ending in 
$A_{\nu,n}$ has the form
\begin{equation}
\label{eq:oh4}
0\to \Omega^2(k)\otimes_k A_{\nu,n}\to  (A_{\sigma,1}\otimes_k A_{\nu,n})\oplus 
(A_{\tau,1}\otimes_k A_{\nu,n}) \to A_{\nu,n}\to 0.
\end{equation}
Note that 
$\Xi([A_{\nu',1}\otimes_kA_{\nu,n}])=(2,2)+\Xi([A_{\nu,n-1}])=\Xi([\Omega^2(A_{\nu,n-1})])$
if $\{\nu,\nu'\}=\{\sigma,\tau\}$.
Since the sequence $(\ref{eq:oh1})$ is the only almost split sequence having $kD$ as a summand of the 
middle term, it follows that the almost split sequence ending in $A_{\nu,n}$ is as in 
$(\ref{eq:assgeneral})$. This completes the proof of Lemma \ref{lem:endotrivial}.
\end{proof}

\begin{lemma}
\label{lem:thiswilldo}
Let $\nu\in\{\sigma,\tau\}$, let $n\ge 0$ and let $A_{\nu,n}$ be as in Remark $\ref{rem:allIneed}$.
Then $\mathrm{dim}_k\,A_{\nu,n}=n2^{d-1}+1$ and $\mathrm{Res}_C^D\,A_{\nu,n}\cong
k\oplus (kC)^{n2^{d-2}}$ for $C\in\{\langle\sigma\rangle,\langle\tau\rangle\}$.
Moreover, there is a short exact sequence of $kD$-modules
\begin{equation}
\label{eq:ohwell}
0\to \mathrm{Ind}_{\langle \nu\rangle}^D\, k\to A_{\nu,n+1}\to A_{\nu,n}\to 0.
\end{equation}
\end{lemma}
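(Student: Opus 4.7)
\emph{Proof plan.} I would prove all three assertions simultaneously by induction on $n\ge 0$. The base case $n=0$ has $A_{\nu,0}=k$, so the dimension and restriction claims are trivial, and the sequence $(\ref{eq:ohwell})$ for $n=0$ is read off from diagram $(\ref{eq:important2})$: by $(\ref{eq:important3})$ the kernel of $\Omega(f_\nu)\colon A_{\nu,1}=\Omega(E_\nu)\to\mathrm{soc}(kD)\cong k$ equals $kD(1+\nu)\cong\mathrm{Ind}_{\langle\nu\rangle}^Dk$, and the dimension comparison $\dim_k\Omega(E_\nu)=2^d-(2^{d-1}-1)=2^{d-1}+1>2^{d-1}=\dim_k kD(1+\nu)$ forces $\Omega(f_\nu)$ to be surjective onto the one-dimensional $\mathrm{soc}(kD)\cong k$. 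From the resulting SES $0\to\mathrm{Ind}_{\langle\nu\rangle}^Dk\to A_{\nu,1}\to k\to 0$ one reads off $\dim_k A_{\nu,1}=2^{d-1}+1$, and since $A_{\nu,1}$ is endo-trivial and the only indecomposable endo-trivial $k[\mathbb{Z}/2]$-modules are $k$ and $k[\mathbb{Z}/2]$, the restrictions $\mathrm{Res}^D_CA_{\nu,1}$ take the required form $k\oplus(kC)^{2^{d-2}}$ by dimension.

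For the inductive step, assuming the three claims for some $n\ge 1$, I would tensor the $n=0$ sequence with $A_{\nu,n}$ over $k$ (an exact operation, since $A_{\nu,n}$ is $k$-flat). Frobenius reciprocity together with the induction hypothesis on restriction identifies the kernel as
\[
\mathrm{Ind}_{\langle\nu\rangle}^Dk\otimes_kA_{\nu,n}\cong\mathrm{Ind}_{\langle\nu\rangle}^D\mathrm{Res}^D_{\langle\nu\rangle}A_{\nu,n}\cong\mathrm{Ind}_{\langle\nu\rangle}^Dk\oplus(kD)^{n\cdot 2^{d-2}},
\]
while by Remark $\ref{rem:allIneed}$ the middle term is $A_{\nu,1}\otimes_kA_{\nu,n}\cong A_{\nu,n+1}\oplus Q$ for some free $kD$-module $Q$. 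Because $kD$ is self-injective, the injective summand $(kD)^{n\cdot 2^{d-2}}$ of the kernel splits off inside the middle term; by Krull-Schmidt this summand lies inside $Q$, and quotienting yields
\[
0\to\mathrm{Ind}_{\langle\nu\rangle}^Dk\to A_{\nu,n+1}\oplus Q'\to A_{\nu,n}\to 0
\]
with $Q'$ free.

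The main obstacle is to show $Q'=0$. Eckmann-Shapiro combined with the inductive restriction gives $\mathrm{Ext}^1_{kD}(A_{\nu,n},\mathrm{Ind}_{\langle\nu\rangle}^Dk)\cong\mathrm{Ext}^1_{k\langle\nu\rangle}(\mathrm{Res}^D_{\langle\nu\rangle}A_{\nu,n},k)\cong k$, so the middle term of any non-split extension is uniquely determined up to isomorphism; a direct check shows $A_{\nu,n+1}\oplus Q'$ is endo-trivial, hence in the $\Omega$-orbit of $A_{\nu,n+1}$. To force $Q'=0$, I would examine the natural projection of the sequence onto the summand $Q'$: if $Q'\ne 0$, then either this projection admits a split-off trivial sequence $0\to 0\to Q'\to Q'\to 0$, which is impossible because $A_{\nu,n}$ has no free summand, or it produces a nonzero map $\mathrm{Ind}_{\langle\nu\rangle}^Dk\to Q'$; the latter case would be ruled out using the structure of $\mathrm{Ind}_{\langle\nu\rangle}^Dk=kD(1+\nu)$ inside $kD$ together with the inductive SES $0\to\mathrm{Ind}_{\langle\nu\rangle}^Dk\to A_{\nu,n}\to A_{\nu,n-1}\to 0$. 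This identification of $Q'$ as zero is the only delicate point.

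Once $(\ref{eq:ohwell})$ is in place for $n+1$, additivity immediately yields $\dim_k A_{\nu,n+1}=2^{d-1}+\dim_kA_{\nu,n}=(n+1)2^{d-1}+1$, and the restriction formula for $A_{\nu,n+1}$ follows from its endo-triviality and its dimension exactly as in the base case.
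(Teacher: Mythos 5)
Your base case and your tensoring construction are exactly the paper's (the sequence for $n=0$ is the paper's $(\ref{eq:labelit1})$, obtained from $(\ref{eq:important2})$ and $(\ref{eq:important3})$, and the inductive sequence comes from tensoring it with $A_{\nu,n}$ as in $(\ref{eq:labelit2})$). The genuine gap is the point you yourself flag: eliminating the residual free summand $Q'$. Neither of your two suggested arguments does this. The fact that $\mathrm{Ext}^1_{kD}(A_{\nu,n},\mathrm{Ind}_{\langle\nu\rangle}^D\,k)\cong k$ only says that all non-split extensions have isomorphic middle terms; since extension classes are stable invariants they can never detect free direct summands, and you have not produced any non-split extension whose middle term you know to be free-summand-free, so this uniqueness gives no control on $Q'$. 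Your second dichotomy also fails: writing the inclusion as $\iota=(\iota_1,\iota_2)\colon \mathrm{Ind}_{\langle\nu\rangle}^D\,k\to A_{\nu,n+1}\oplus Q'$, the case $\iota_2=0$ is indeed excluded by Krull--Schmidt, but the surviving case merely gives a nonzero homomorphism $kD(1+\nu)\to Q'$, and such maps exist in abundance (the inclusion $kD(1+\nu)\hookrightarrow kD$, for instance), so there is no contradiction to be had from "the structure of $kD(1+\nu)$ inside $kD$." In your scheme the statement $Q'=0$ is equivalent to the dimension formula $\dim_k A_{\nu,n+1}=(n+1)2^{d-1}+1$, which you only propose to deduce afterwards from $(\ref{eq:ohwell})$; the argument is circular exactly there.

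The paper avoids this by establishing the dimension and restriction statements for all $n$ \emph{before} constructing $(\ref{eq:ohwell})$: restricting the almost split sequences $(\ref{eq:assk})$ and $(\ref{eq:assgeneral})$ to the maximal elementary abelian subgroups $K_1,K_2$ shows they split there, the composites of the irreducible maps yield a short exact sequence $0\to \Omega^{2n}(k)\to A_{\sigma,n}\oplus A_{\tau,n}\to k\to 0$ (the kernel being identified via the $\Xi$-values), and then $\dim_k\Omega^{2n}(k)=n2^d+1$ together with $\dim_k A_{\sigma,n}=\dim_k A_{\tau,n}$ gives $\dim_k A_{\nu,n}=n2^{d-1}+1$; the restriction formula follows from stable triviality of $\mathrm{Res}^D_C A_{\nu,n}$ plus this dimension. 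With $\dim_k A_{\nu,n+1}$ known in advance, a dimension count pins the free part of $A_{\nu,1}\otimes_k A_{\nu,n}$ to exactly $(kD)^{n2^{d-2}}$, matching the free part of $(\mathrm{Ind}_{\langle\nu\rangle}^D\,k)\otimes_k A_{\nu,n}$, and splitting it off leaves $(\ref{eq:ohwell})$ with no leftover summand. To repair your proof, either adopt this order (prove the dimension formula first by the almost-split-sequence argument) or supply an independent computation of the free rank of $A_{\nu,1}\otimes_k A_{\nu,n}$; as written, the induction does not close.
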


\begin{proof}
When we restrict the almost split sequences $(\ref{eq:assk})$ and
$(\ref{eq:assgeneral})$ to the elementary abelian subgroups $K_1$ and $K_2$ from
Remark $\ref{rem:allIneed}$, it follows that the resulting short exact sequences of
$kK_i$-modules split for $i=1,2$. 
Define $\phi_{\nu,1}:A_{\nu,1}\to k$ to be the restriction of the homomorphism $\mu_1$ in 
$(\ref{eq:assk})$ to the component $A_{\nu,1}$, and for $n\ge 1$ define $\phi_{\nu,n+1}:A_{\nu,n+1}\to 
A_{\nu,n}$ to be the restriction of the homomorphism $\mu_{\nu,n+1}$ in $(\ref{eq:assgeneral})$ to the 
component $A_{\nu,n+1}$.
For $n\ge 1$, let $\Phi_{\nu,n}:A_{\nu,n}\to k$ be the composition $\Phi_{\nu,n}=
\phi_{\nu,1}\circ\phi_{\nu,2}\circ\ldots\circ\phi_{\nu,n}$. Then it follows that the homomorphism
$\Phi_n:A_{\sigma,n}\oplus A_{\tau,n}\to k$, 
which restricted to $A_{\nu,n}$ is given by $\Phi_{\nu,n}$,
splits when viewed as a homomorphism of $kK_i$-modules for $i=1,2$. 
Since $\Xi([A_{\sigma,n}])=(2n,0)$ and $\Xi([A_{\tau,n}])=(0,2n)$, this implies
that for all $n\ge 1$ we have a short exact sequence of $kD$-modules of the form
$$0\to \Omega^{2n}(k)\to A_{\sigma,n}\oplus A_{\tau,n}\xrightarrow{\Phi_n} k \to 0.$$
Inductively, we see that $\Omega^{2n}(k)$ has $k$-dimension $n2^d+1$. Because 
the $k$-dimensions of $A_{\sigma,n}$ and $A_{\tau,n}$ coincide,
this implies that
$\mathrm{dim}_k\,A_{\nu,n}=\frac{1}{2}\,(n2^d+2)=n2^{d-1}+1$.

Let $C\in\{\langle\sigma\rangle,\langle\tau\rangle\}$.
Since $E_\nu=kD(1+\nu)/\mathrm{soc}(kD)$, it follows that $\mathrm{Res}_C^D\, E_\nu$
is stably isomorphic to $k$. Hence we obtain for  $A_{\nu,1}=\Omega(E_\nu)$ that  
$\mathrm{Res}_C^D\, A_{\nu,1}$ also is stably isomorphic to $k$. 
Therefore, it follows by induction from the almost split sequence $(\ref{eq:assgeneral})$ that
$\mathrm{Res}_C^D\, A_{\nu,n+1}$ is stably isomorphic to $k$ for all $n\ge 1$. Comparing 
$k$-dimensions it follows that
$\mathrm{Res}_C^D\, A_{\nu,n}\cong k\oplus
(kC)^{n2^{d-2}}$ for all $n\ge 0$.

To construct a short exact sequence of the form $(\ref{eq:ohwell})$, recall that
the almost split sequence $(\ref{eq:assk})$ is obtained by applying $\Omega$
to the almost split sequence $(\ref{eq:oh1})$. Since the restriction of the homomorphism
$\mu_{-1}$ in $(\ref{eq:oh1})$ to the component $E_\nu$ is the same as the 
homomorphism $f_\nu$ in the diagram $(\ref{eq:important2})$,
the restriction of the homomorphism $\mu_1$ in 
$(\ref{eq:assk})$ to the component $A_{\nu,1}$ is the same as
$\Omega(f_\nu)$. By  $(\ref{eq:important3})$ we have $\mathrm{Ker}(\Omega(f_\nu))\cong
kD(1+\nu)$, which implies by $(\ref{eq:important1})$ that there is 
a short exact sequence of $kD$-modules of the form
\begin{equation}
\label{eq:labelit1}
0\to \mathrm{Ind}_{\langle \nu\rangle}^D\, k\to A_{\nu,1}\to k\to 0.
\end{equation}
Now let $n\ge 1$. Tensoring the sequence $(\ref{eq:labelit1})$ with $A_{\nu,n}$, we obtain
a short exact sequence of $kD$-modules of the form
\begin{equation}
\label{eq:labelit2}
0\to (\mathrm{Ind}_{\langle \nu\rangle}^D\, k)\otimes_k A_{\nu,n}\to A_{\nu,1}\otimes_k A_{\nu,n}\to 
A_{\nu,n}\to 0.
\end{equation}
Since $(\mathrm{Ind}_{\langle \nu\rangle}^D\, k)\otimes_k A_{\nu,n}\cong
\mathrm{Ind}_{\langle \nu\rangle}^D\,(\mathrm{Res}_{\langle \nu\rangle}^D\, A_{\nu,n})$
and $\mathrm{Res}_{\langle \nu\rangle}^D\, A_{\nu,n}\cong k\oplus (k\langle\nu\rangle)^{n2^{d-2}}$, 
it follows  that
$$(\mathrm{Ind}_{\langle \nu\rangle}^D\, k)\otimes_k A_{\nu,n}
\cong \mathrm{Ind}_{\langle \nu\rangle}^D\, k\;\oplus (kD)^{n2^{d-2}}.$$
By definition, $A_{\nu,n+1}$ is the unique indecomposable $kD$-module, up to 
isomorphism, in the equivalence class of the endo-trivial $kD$-module $A_{\nu,1}\otimes_k A_{\nu,n}$.
Comparing $k$-dimensions it follows that
$\mathrm{dim}_k\,A_{\nu,1}\otimes_k A_{\nu,n}=
n2^{d-2}\cdot 2^d+\mathrm{dim}_k\,A_{\nu,n+1}$.
Hence
$$A_{\nu,1}\otimes_k A_{\nu,n}\cong A_{\nu,n+1}\oplus (kD)^{n2^{d-2}}.$$
By splitting off the free $kD$-module $(kD)^{n2^{d-2}}$ from the first and second term of
the short exact sequence $(\ref{eq:labelit2})$
we obtain a short exact sequence of $kD$-modules of the form $(\ref{eq:ohwell})$.
\end{proof}

%%%%%%%%%%%%%%%%%%%%%%%%%%%%%%%%%%%%%%%%%%%%%%%%%%%%%%%%%
%% Universal deformation rings for $D$ and nilpotent blocks
%%%%%%%%%%%%%%%%%%%%%%%%%%%%%%%%%%%%%%%%%%%%%%%%%%%%%%%%%

\section{Universal deformation rings for $D$ and nilpotent blocks}
\label{s:udr}
\setcounter{equation}{0}

In this section, we prove Theorem \ref{thm:supermain} and Corollary \ref{cor:nilpotent}. 
We use the notations of the previous
sections. In particular, $k$ is an algebraically closed field of characteristic $2$, $W$ is the
ring of infinite Witt vectors over $k$ and $F$ is the fraction field of $W$.

For $\nu\in\{\sigma,\tau\}$ and $n\ge 0$, let $A_{\nu,n}$ be as in Remark \ref{rem:allIneed}.
We first  analyze all extensions of $A_{\nu,n}$ by $A_{\nu,n}$ by using the extensions of $k$
by $k$ which are described in the following remark.

\begin{rem}
\label{rem:dothisfirst}
Suppose $N$ is a $kD$-module which lies in a non-split short exact sequence of $kD$-modules
\begin{equation}
\label{eq:sesk}
0\to k\to N\to k\to 0.
\end{equation}
Then $N$ is isomorphic to $N_\lambda$ for some $\lambda\in k^*\cup \{\sigma,\tau\}$ where
a representation $\varphi_\lambda$ of $N_\lambda$ is given by the following $2\times 2$ matrices
over $k$.
\begin{itemize}
\item[a.] If $\lambda \in k^*$ then $\varphi_\lambda(\sigma)=\left(\begin{array}{cc}1&1\\0&1
\end{array}\right)$,
$\varphi_\lambda(\tau)=\left(\begin{array}{cc}1&\lambda\\0&1\end{array}\right)$.
\vspace{1ex}
\item[b.] If $\{\lambda,\lambda'\}=\{\sigma,\tau\}$ then $\varphi_\lambda(\lambda)=\left(\begin{array}{cc}1&1\\0&1\end{array}\right)$,
$\varphi_\lambda(\lambda')=\left(\begin{array}{cc}1&0\\0&1\end{array}\right)$.
\end{itemize}
We obtain the following restrictions of $N_\lambda$ to the subgroups $\langle\sigma\rangle$
and $\langle\tau\rangle$ of $D$.
\begin{itemize}
\item In case (a),
$\mathrm{Res}^D_C\, N_\lambda\cong kC$ for $C\in\{\langle\sigma\rangle,\langle\tau\rangle\}$.
\vspace{1ex}
\item In case (b),
$\mathrm{Res}^D_{\langle\lambda\rangle}\, N_\lambda\cong k\langle\lambda\rangle$
and $\mathrm{Res}^D_{\langle\lambda'\rangle}\, N_\lambda\cong k^2$.
\end{itemize}
\end{rem}

\begin{lemma}
\label{lem:extensions}
Let  $\nu\in\{\sigma,\tau\}$, let $n\ge 0$, and let $A_{\nu,n}$ be as in Remark $\ref{rem:allIneed}$.
Then $\mathrm{Ext}^1_{kD}(A_{\nu,n},A_{\nu,n})\cong k^2$.
Suppose $Z_{\nu,n}$ is a $kD$-module which lies in a non-split
short exact sequence of $kD$-modules
\begin{equation}
\label{eq:ext1}
0\to A_{\nu,n}\to Z_{\nu,n}\to A_{\nu,n}\to 0.
\end{equation}
Then $Z_{\nu,n}$ is isomorphic to $B_{\nu,n,\lambda}=A_{\nu,n}\otimes_k N_\lambda$ for some 
$\lambda\in k^*\cup\{\sigma,\tau\}$ where $N_\lambda$ is as in Remark $\ref{rem:dothisfirst}$.
Let $\{\nu,\nu'\}=\{\sigma,\tau\}$.
\begin{itemize}
\item[i.] If $\lambda\in k^*\cup\{\nu\}$ then $B_{\nu,n,\lambda}\cong N_\lambda\oplus (kD)^n$.
\item[ii.] If $\lambda=\nu'$ then $B_{\nu,0,\nu'}\cong N_{\nu'}$
and for $n\ge 1$, $B_{\nu,n,\nu'}$ is a non-split extension of $B_{\nu,n-1,\nu'}$ by $kD(1+\nu)\oplus
kD(1+\nu)$. Moreover, 
$\mathrm{Res}^D_{\langle\nu'\rangle}\, B_{\nu,n,\nu'}\cong (k\langle\nu'\rangle)^{n2^{d-1}+1}$ and
$\mathrm{Res}^D_{\langle\nu\rangle}\, B_{\nu,n,\nu'}\cong k^2\oplus (k\langle\nu\rangle)^{n2^{d-1}}$.
\end{itemize}
\end{lemma}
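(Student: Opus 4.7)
The plan is to reduce everything to $\mathrm{Ext}^1_{kD}(k,k)$ via endo-triviality of $A_{\nu,n}$, realize every non-split extension as a tensor product of the base extensions $N_\lambda$ with $A_{\nu,n}$, and then analyze the two regimes by induction on $n$ using the short exact sequence $(\ref{eq:ohwell})$.

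First I would establish the Ext dimension. Because $A_{\nu,n}$ is endo-trivial, $\mathrm{Hom}_k(A_{\nu,n}, A_{\nu,n}) \cong A_{\nu,n}^* \otimes_k A_{\nu,n} \cong k \oplus P$ as $kD$-modules for some projective $kD$-module $P$. Tensor-Hom adjunction combined with the vanishing of $\mathrm{Ext}^1_{kD}(k, P)$ then gives
$$\mathrm{Ext}^1_{kD}(A_{\nu,n}, A_{\nu,n}) \cong \mathrm{Ext}^1_{kD}(k, k) \cong \mathrm{Hom}(D^{\mathrm{ab}}, k) \cong k^2,$$
using $D^{\mathrm{ab}} \cong \mathbb{Z}/2 \times \mathbb{Z}/2$. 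Moreover the $k$-linear map $[N] \mapsto [A_{\nu,n} \otimes_k N]$ from $\mathrm{Ext}^1_{kD}(k,k)$ to $\mathrm{Ext}^1_{kD}(A_{\nu,n}, A_{\nu,n})$ is an isomorphism: an inverse is obtained by tensoring with $A_{\nu,n}^*$ and projecting onto the $k$-summand of $A_{\nu,n}^* \otimes A_{\nu,n}$. Hence every non-split extension $Z_{\nu,n}$ is equivalent to $A_{\nu,n} \otimes_k N$ for some non-split extension $N$ of $k$ by $k$, and by Remark $\ref{rem:dothisfirst}$ its middle term is isomorphic to $B_{\nu,n,\lambda} = A_{\nu,n} \otimes_k N_\lambda$ for some $\lambda \in k^* \cup \{\sigma, \tau\}$.

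For case (i), $\lambda \in k^* \cup \{\nu\}$, I would induct on $n$, with base case $n = 0$ immediate from $B_{\nu,0,\lambda} = N_\lambda$. For the inductive step, tensor $(\ref{eq:ohwell})$ with $N_\lambda$ and apply the projection formula to obtain
$$0 \to \mathrm{Ind}^D_{\langle\nu\rangle}(\mathrm{Res}^D_{\langle\nu\rangle} N_\lambda) \to B_{\nu,n+1,\lambda} \to B_{\nu,n,\lambda} \to 0.$$
Direct inspection of the matrices in Remark $\ref{rem:dothisfirst}$ shows $\mathrm{Res}^D_{\langle\nu\rangle} N_\lambda \cong k\langle\nu\rangle$ in this case, so the left-hand term becomes $\mathrm{Ind}^D_{\langle\nu\rangle} k\langle\nu\rangle \cong kD$. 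Since $kD$ is self-injective, the sequence splits and the inductive hypothesis gives $B_{\nu,n+1,\lambda} \cong kD \oplus N_\lambda \oplus (kD)^n \cong N_\lambda \oplus (kD)^{n+1}$, completing the induction.

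For case (ii), $\lambda = \nu'$, the same tensoring procedure with $N_{\nu'}$ uses $\mathrm{Res}^D_{\langle\nu\rangle} N_{\nu'} \cong k^2$ (since $\nu$ acts trivially on $N_{\nu'}$), producing via $(\ref{eq:important1})$ the claimed short exact sequence
$$0 \to kD(1+\nu) \oplus kD(1+\nu) \to B_{\nu,n+1,\nu'} \to B_{\nu,n,\nu'} \to 0.$$
The restriction formulas follow by straightforward tensor computations combining Lemma $\ref{lem:thiswilldo}$ with $\mathrm{Res}^D_{\langle\nu\rangle} N_{\nu'} \cong k^2$ and $\mathrm{Res}^D_{\langle\nu'\rangle} N_{\nu'} \cong k\langle\nu'\rangle$. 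For non-splitness, I would compute $\mathrm{Res}^D_{\langle\nu\rangle} kD(1+\nu) \cong k^2 \oplus (k\langle\nu\rangle)^{2^{d-2}-1}$ via Mackey using $C_D(\nu) = \langle\nu, z\rangle$ of order $4$; a hypothetical splitting of the above sequence would produce $k^6 \oplus (k\langle\nu\rangle)^{(n+1)2^{d-1}-2}$ upon restriction to $\langle\nu\rangle$, contradicting the direct computation $\mathrm{Res}^D_{\langle\nu\rangle} B_{\nu,n+1,\nu'} \cong k^2 \oplus (k\langle\nu\rangle)^{(n+1)2^{d-1}}$. The main obstacle is Step 2: rigorously checking that tensoring with $A_{\nu,n}^*$ and projecting onto the $k$-summand of $A_{\nu,n}^* \otimes A_{\nu,n}$ genuinely inverts the construction $[N] \mapsto [A_{\nu,n} \otimes N]$ at the Ext level, which requires verifying that the projective summand $P$ contributes trivially to both the Ext group and the equivalence class of the resulting extension.
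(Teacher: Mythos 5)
Your proposal is correct and follows essentially the same route as the paper's proof: identify $\mathrm{Ext}^1_{kD}(A_{\nu,n},A_{\nu,n})$ with $\mathrm{Ext}^1_{kD}(k,k)\cong k^2$ via endo-triviality, so every non-split self-extension has middle term $A_{\nu,n}\otimes_k N_\lambda$, and then tensor the sequence $(\ref{eq:ohwell})$ with $N_\lambda$ and use the projection formula, splitting off $kD$ in case (i) and obtaining the $kD(1+\nu)\oplus kD(1+\nu)$ extension in case (ii). The step you flag as the main obstacle is exactly what the paper delegates to \cite[Thm.\ 2.6]{AC} (injectivity of the tensoring map, then a dimension count), and your sketched retraction via $A_{\nu,n}^*\otimes_k A_{\nu,n}\cong k\oplus P$ with $\mathrm{Ext}^1_{kD}(k,P)=0$ is a standard and adequate substitute. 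The one genuine (and welcome) local difference is in case (ii): you read off the restrictions of $B_{\nu,n,\nu'}=A_{\nu,n}\otimes_k N_{\nu'}$ to $\langle\nu\rangle$ and $\langle\nu'\rangle$ directly from Lemma \ref{lem:thiswilldo} and Remark \ref{rem:dothisfirst}, since restriction commutes with $\otimes_k$; the paper instead derives them by induction from the restricted exact sequences, which requires a slightly delicate count of trivial versus free summands. Your direct computation is simpler, and your non-splitness argument (comparing the restriction to $\langle\nu\rangle$ of the middle term with what a splitting would force) matches the paper's.
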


\begin{proof}
Since $A_{\nu,n}$ is endo-trivial, it follows from \cite[Thm. 2.6]{AC} that the natural homomorphism
\begin{equation}
\label{eq:does}
\mathrm{Ext}^1_{kD}(k,k)\to \mathrm{Ext}^1_{kD}(A_{\nu,n},A_{\nu,n})
\end{equation}
resulting from tensoring  short exact sequences of the form $(\ref{eq:sesk})$ with
$A_{\nu,n}$ is a monomorphism. Moreover,
$$\mathrm{Ext}_{kD}^1(A_{\nu,n},A_{\nu,n})\cong 
\HH^1(D,A_{\nu,n}^*\otimes_kA_{\nu,n})
\cong \HH^1(D,k)\cong \mathrm{Ext}^1_{kD}(k,k)\cong k^2$$
where the second isomorphism follows since $A_{\nu,n}$ is endo-trivial. Hence the homomorphism
in $(\ref{eq:does})$ is an isomorphism, which means that we only need 
to prove the descriptions of $B_{\nu,n,\lambda}=A_{\nu,n}\otimes_kN_\lambda$ given in (i) and (ii) of 
the statement of Lemma \ref{lem:extensions}.
For  $n=0$ this follows from Remark \ref{rem:dothisfirst} since $A_{\nu,0}=k$ and so
$B_{\nu,0,\lambda}=N_\lambda$.

By Lemma \ref{lem:thiswilldo}, there is a short exact sequence of $kD$-modules of the form
$(\ref{eq:ohwell})$. Tensoring this sequence with $N_\lambda$ over $k$ gives a short exact
sequence of $kD$-modules of the form
\begin{equation}
\label{eq:lala1}
0\to (\mathrm{Ind}_{\langle \nu\rangle}^D\, k)\otimes_kN_\lambda \to B_{\nu,n+1,\lambda}
\to B_{\nu,n,\lambda}\to 0
\end{equation}
where $(\mathrm{Ind}_{\langle \nu\rangle}^D\, k)\otimes_kN_\lambda\cong
\mathrm{Ind}_{\langle \nu\rangle}^D\,(\mathrm{Res}_{\langle\nu\rangle}^D\,N_\lambda)$.

Let first $\lambda\in k^*\cup\{\nu\}$. Then $\mathrm{Res}_{\langle\nu\rangle}^D\,N_\lambda
\cong k\langle \nu\rangle$ 
and hence $\mathrm{Ind}_{\langle \nu\rangle}^D\,(\mathrm{Res}_{\langle\nu\rangle}^D\,N_\lambda)
\cong kD$. It follows by induction that  $B_{\nu,n+1,\lambda}
\cong N_\lambda\oplus (kD)^{n+1}$, which proves (i).

Now let $\lambda=\nu'$. Then $\mathrm{Res}_{\langle\nu\rangle}^D\,N_{\nu'}\cong k^2$ and hence 
$$\mathrm{Ind}_{\langle \nu\rangle}^D\,(\mathrm{Res}_{\langle\nu\rangle}^D
\,N_\nu')\cong \mathrm{Ind}_{\langle \nu\rangle}^D\,k^2\cong kD(1+\nu)\oplus kD(1+\nu).$$
Thus $B_{\nu,n+1,\nu'}$ is an extension of $B_{\nu,n,\nu'}$ by $kD(1+\nu)\oplus
kD(1+\nu)$.
Since $\mathrm{Res}_{\langle \nu'\rangle}^D\,kD(1+\nu)\cong (k\langle \nu'\rangle)^{2^{d-2}}$,
it follows by induction that 
$\mathrm{Res}^D_{\langle \nu'\rangle}\, B_{\nu,n+1,\nu'}\cong (k\langle \nu'\rangle)^{(n+1)2^{d-1}+1}$.
On the other hand, $\mathrm{Res}_{\langle \nu\rangle}^D\,kD(1+\nu)\cong k^2\oplus (k\langle \nu\rangle)^{2^{d-2}-1}$.
Hence the restriction of the left term in the short exact sequence $(\ref{eq:lala1})$ to $\langle \nu\rangle$
is isomorphic to $k^4\oplus (k\langle \nu\rangle)^{2^{d-1}-2}$. Thus by induction, 
$\mathrm{Res}^D_{\langle \nu\rangle}\, B_{\nu,n+1,\nu'}$ lies in a short exact sequence of $k\langle \nu\rangle$-modules
of the form
$$0\to k^4\oplus (k\langle \nu\rangle)^{2^{d-1}-2}\to \mathrm{Res}^D_{\langle \nu\rangle}\, B_{\nu,n+1,\nu'}
\to k^2\oplus (k\langle \nu\rangle)^{n2^{d-1}}\to 0.$$
Since $\mathrm{Res}^D_{\langle \nu\rangle}\, B_{\nu,n+1,\nu'}$ is an extension of 
$\mathrm{Res}^D_{\langle \nu\rangle}\, A_{\nu,n+1}$ by itself and since $\mathrm{Res}^D_{\langle \nu\rangle}\, A_{\nu,n+1}
\cong k\oplus (k\langle \nu\rangle)^{(n+1)2^{d-2}}$
by Lemma \ref{lem:thiswilldo}, it follows that $\mathrm{Res}^D_{\langle \nu\rangle}\, B_{\nu,n+1,\nu'}
\cong k^2\oplus (k\langle \nu\rangle)^{(n+1)2^{d-1}}$.
In particular, the sequence $(\ref{eq:lala1})$ does not split when $\lambda=\nu'$, since it does not split when restricted to
$\langle \nu\rangle$. This proves (ii).
\end{proof}

The next result uses the restrictions of $A_{\nu,n}$ and $B_{\nu,n,\lambda}$ to the cyclic subgroups 
$\langle\sigma\rangle$ and $\langle\tau\rangle$ of $D$.

\begin{lemma}
\label{lem:almostudr}
Let  $\nu\in\{\sigma,\tau\}$, let $n\ge 0$, and let $A_{\nu,n}$ be as in Remark $\ref{rem:allIneed}$.
Then there is a surjective $W$-algebra homomorphism 
$\alpha: W[\mathbb{Z}/2\times\mathbb{Z}/2]\to R(D,A_{\nu,n})$ in $\mathcal{C}$.
\end{lemma}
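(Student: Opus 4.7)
The plan is to realize $W[\mathbb{Z}/2\times\mathbb{Z}/2]$ as the coproduct in $\mathcal{C}$ of $R(\langle\sigma\rangle,k)$ and $R(\langle\tau\rangle,k)$, both of which are easily seen to be isomorphic to $W[\mathbb{Z}/2]$, and to construct $\alpha$ by restricting the universal deformation of $A_{\nu,n}$ to $\langle\sigma\rangle$ and $\langle\tau\rangle$. Let $R=R(D,A_{\nu,n})$ and let $U$ be a universal lift of $A_{\nu,n}$ over $R$. By Lemma \ref{lem:thiswilldo}, for each $\mu\in\{\sigma,\tau\}$ the restriction $\mathrm{Res}^D_{\langle\mu\rangle}A_{\nu,n}\cong k\oplus(k\langle\mu\rangle)^{n2^{d-2}}$ differs from $k$ only by a projective summand, so its stable endomorphism ring is $k$ and Lemma \ref{lem:defhelp}(ii) identifies its universal deformation ring with $R(\langle\mu\rangle,k)\cong W[\mathbb{Z}/2]$. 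The restricted lift $\mathrm{Res}^D_{\langle\mu\rangle}U$ then produces, by the universal property, a morphism $\alpha_\mu:W[\mathbb{Z}/2]\to R$ in $\mathcal{C}$.

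Because $R$ is commutative, $\alpha_\sigma$ and $\alpha_\tau$ commute, and since both source rings are finite over $W$ the tensor product $W[\mathbb{Z}/2]\otimes_W W[\mathbb{Z}/2]\cong W[\mathbb{Z}/2\times\mathbb{Z}/2]$ is already complete and coincides with the coproduct in $\mathcal{C}$. Combining $\alpha_\sigma$ and $\alpha_\tau$ thus gives the desired homomorphism $\alpha:W[\mathbb{Z}/2\times\mathbb{Z}/2]\to R$. Surjectivity of $\alpha$ is equivalent, by Nakayama, to injectivity of the dual tangent map
\[
t_R\cong\mathrm{Ext}^1_{kD}(A_{\nu,n},A_{\nu,n})\longrightarrow\mathrm{Ext}^1_{k\langle\sigma\rangle}(k,k)\oplus\mathrm{Ext}^1_{k\langle\tau\rangle}(k,k).
\]
Unwinding the universal property identifies this map with the pair of restriction maps followed by projection onto the non-projective (trivial) summand of $\mathrm{Res}^D_{\langle\mu\rangle}A_{\nu,n}$; by Lemma \ref{lem:extensions} both sides are two-dimensional over $k$, so it suffices to prove injectivity.

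I will establish injectivity using the classification in Lemma \ref{lem:extensions}. The nonzero classes in $\mathrm{Ext}^1_{kD}(A_{\nu,n},A_{\nu,n})$ are represented by the $B_{\nu,n,\lambda}$ for $\lambda\in k^*\cup\{\sigma,\tau\}$, and the two classes $[B_{\nu,n,\sigma}]$ and $[B_{\nu,n,\tau}]$ are linearly independent (they correspond to distinct points of $\mathbb{P}^1(k)$) and so form a $k$-basis. Case (i) of Lemma \ref{lem:extensions} gives $B_{\nu,n,\nu}\cong N_\nu\oplus(kD)^n$, whose restriction to $\langle\nu\rangle$ is the free module $(k\langle\nu\rangle)^{n2^{d-1}+1}$, representing the non-split class modulo projectives, while its restriction to $\langle\nu'\rangle$ is $k^2\oplus(k\langle\nu'\rangle)^{n2^{d-1}}$, which has $k^2$ as a direct summand and so splits modulo projectives. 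Case (ii) gives the opposite behaviour for $B_{\nu,n,\nu'}$. Thus the basis $\{[B_{\nu,n,\sigma}],[B_{\nu,n,\tau}]\}$ maps to a basis of the two-dimensional target, so the tangent map is an isomorphism and $\alpha$ is surjective.

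The step most requiring care is the verification in case (ii) of Lemma \ref{lem:extensions}, where $\mathrm{Res}^D_{\langle\nu'\rangle}B_{\nu,n,\nu'}\cong(k\langle\nu'\rangle)^{n2^{d-1}+1}$ is a \emph{free} $k\langle\nu'\rangle$-module yet must represent a non-split extension of $k$ by $k$ modulo projectives. The key observation is that a split such extension would force the module to decompose as $k^2\oplus(k\langle\nu'\rangle)^{n2^{d-1}}$, which has $k^2$ as a direct summand, whereas no free $k\langle\nu'\rangle$-module does; hence the extension cannot split, and the argument goes through.
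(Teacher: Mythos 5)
Your proof is correct and takes essentially the same route as the paper: both construct $\alpha$ from the restrictions of a universal lift to $\langle\sigma\rangle$ and $\langle\tau\rangle$ (using that these restrictions are $k$ plus a free module, so their deformation rings are $W[\mathbb{Z}/2]$, and that the completed tensor product is the coproduct in $\mathcal{C}$), and both deduce surjectivity from how the modules $B_{\nu,n,\lambda}$ of Lemma \ref{lem:extensions} restrict to the two cyclic subgroups. The only difference is cosmetic: you phrase the last step as injectivity of the tangent map on $\mathrm{Ext}^1_{kD}(A_{\nu,n},A_{\nu,n})$ via the basis given by $\lambda=\sigma,\tau$, whereas the paper phrases the same computation as the composites $f\circ\alpha$ exhausting all morphisms to $k[\epsilon]/(\epsilon^2)$.
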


\begin{proof}
Let $R=R(D,A_{\nu,n})$ and let $U_{\nu,n}$ be a universal lift of 
$A_{\nu,n}$ over $R$. Let $C_\sigma=\langle\sigma\rangle$ and 
$C_\tau=\langle\tau\rangle$, and let $C\in\{C_\sigma,C_\tau\}$. By Lemma \ref{lem:thiswilldo},
$\mathrm{Res}^D_{C}A_{\nu,n}\cong k\oplus (kC)^{n2^{d-2}}$. 
In particular, $\mathrm{Res}^D_{C}A_{\nu,n}$  is a $kC$-module with stable endomorphism ring $k$,
and hence it has a universal deformation ring. Moreover, 
$R(C,\mathrm{Res}^D_{C}A_{\nu,n})\cong W[\mathbb{Z}/2]$.
Let $U_{\nu,n,C}$ be a universal
lift of $\mathrm{Res}^D_{C}A_{\nu,n}$ over $W[\mathbb{Z}/2]$. Then 
there exists a unique $W$-algebra homomorphism $\alpha_C:W[\mathbb{Z}/2]\to R$ in $\mathcal{C}$ 
such that $\mathrm{Res}^D_{C}U_{\nu,n}\cong R\otimes_{W[\mathbb{Z}/2],\alpha_C} U_{\nu,n,C}$. 
Since the completed
tensor product over $W$ is the coproduct in the category $\mathcal{C}$, we get a $W$-algebra
homomorphism
$$\alpha=\alpha_{C_\sigma}\otimes \alpha_{C_\tau}: W[\mathbb{Z}/2]\otimes_W W[\mathbb{Z}/2]\to R$$
in $\mathcal{C}$
with $\alpha(x_1\otimes x_2)=\alpha_{C_\sigma}(x_1)\,\alpha_{C_\tau}(x_2)$. 

By Lemma \ref{lem:extensions}, every non-trivial lift of $A_{\nu,n}$ over the dual numbers
$k[\epsilon]/(\epsilon^2)$ is as $kD$-module isomorphic to $B_{\nu,n,\lambda}$ for some
$\lambda\in k^*\cup\{\sigma,\tau\}$. The description of these modules shows that their restrictions 
to $C_\sigma$ and $C_\tau$ are as follows:
\begin{enumerate}
\item[i.] If $\lambda\in k^*$ then 
$\mathrm{Res}^D_C \,B_{\nu,n,\lambda}=(kC)^{n2^{d-1}+1}$ for $C\in\{C_\sigma,C_\tau\}$.
\item[ii.] If $\{\lambda,\lambda'\}=\{\sigma,\tau\}$, then
$\mathrm{Res}^D_{C_{\lambda}}\, B_{\nu,n,\lambda}=(kC_{\lambda})^{n2^{d-1}+1}$
and $\mathrm{Res}^D_{C_{\lambda'}}\, B_{\nu,n,\lambda}=k^2\oplus (kC_{\lambda'})^{n2^{d-1}}$.
\end{enumerate}
Note that for $C\in\{C_\sigma,C_\tau\}$, $k^2\oplus (kC)^{n2^{d-1}}$ corresponds to the trivial lift of 
$\mathrm{Res}^D_{C}A_{\nu,n}$ over $k[\epsilon]/(\epsilon^2)$, and $(kC)^{n2^{d-1}+1}$ 
corresponds to a non-trivial lift  of $\mathrm{Res}^D_{C}A_{\nu,n}$ over $k[\epsilon]/(\epsilon^2)$.

Let $\lambda\in k^*\cup\{\sigma,\tau\}$, and let $f_\lambda:R\to k[\epsilon]/(\epsilon^2)$ be a morphism 
corresponding to a non-trivial lift of $A_{\nu,n}$ over $k[\epsilon]/(\epsilon^2)$ with underlying 
$kD$-module structure given by $B_{\nu,n,\lambda}$. Then 
$f_\lambda\circ \alpha=f_\lambda\circ(\alpha_{C_\sigma}\otimes \alpha_{C_\tau})$ 
corresponds to the pair of lifts
of the pair $(\mathrm{Res}^D_{C_\sigma}A_{\nu,n},\mathrm{Res}^D_{C_\tau}A_{\nu,n})$ over 
$k[\epsilon]/(\epsilon^2)$
with underlying $kD$-module structure given by the pair
$(\mathrm{Res}^D_{C_\sigma} B_{\nu,n,\lambda},\mathrm{Res}^D_{C_\tau} B_{\nu,n,\lambda})$.
Hence (i) and (ii) above imply that if $f$ runs through the morphisms 
$R\to k[\epsilon]/(\epsilon^2)$, then $f\circ \alpha$ runs through the morphisms
$W[\mathbb{Z}/2]\otimes_W W[\mathbb{Z}/2]\to k[\epsilon]/(\epsilon^2)$.
This implies $\alpha$ is surjective.
\end{proof}

We next determine how many non-isomorphic lifts $A_{\nu,n}$ has over $W$.

\begin{lemma}
\label{lem:almostdone}
Let  $\nu\in\{\sigma,\tau\}$, let $n\ge 0$, and let $A_{\nu,n}$ be as in Remark $\ref{rem:allIneed}$. 
Then $A_{\nu,n}$ has four pairwise non-isomorphic
lifts over $W$ corresponding to four distinct morphisms $R(D,A_{\nu,n})\to W$ in $\mathcal{C}$.
\end{lemma}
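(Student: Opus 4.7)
My plan is to construct four pairwise non-isomorphic $WD$-lifts of $A_{\nu,n}$; by the universal property of $R:=R(D,A_{\nu,n})$, these correspond to four distinct morphisms $R\to W$ in $\mathcal{C}$. I first produce a single lift $U$ of $A_{\nu,n}$ over $W$, which is furnished by Alperin's theorem (\cite{alpendotrivial}, as recalled in the introduction): since $A_{\nu,n}$ is endo-trivial and $D$ is a $2$-group, there exists a $WD$-module $U$ with $U/2U\cong A_{\nu,n}$. (Alternatively, one could construct $U$ inductively on $n$ by iteratively applying Lemma \ref{lem:Wlift} to the short exact sequence $(\ref{eq:ohwell})$, using the $W$-lifts $L_{a,b}$ of $\mathrm{Ind}_{\langle\nu\rangle}^D k$ from Lemma \ref{lem:maxliftsW}.)

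Given such a $U$, for $i=1,2,3,4$ I set $U_i:=U\otimes_W\chi_i$, where I abuse notation and write $\chi_i$ also for the rank-one $WD$-lattice on which $D$ acts via the linear character $\chi_i$ of Table \ref{tab:chartable}. Since $\chi_i$ takes values in $\{\pm 1\}$, its reduction modulo $2$ is the trivial character, and so each $U_i$ is again a $WD$-lift of $A_{\nu,n}$.

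The main obstacle is to show $U_1,U_2,U_3,U_4$ are pairwise non-isomorphic as $WD$-modules, equivalently that $U\not\cong U\otimes_W\chi_j$ for any $j\in\{2,3,4\}$. Since isomorphism of $WD$-lattices is preserved under base change to $F$, it suffices to show that the character of $F\otimes_W U$, written as $\sum_{i=1}^4 m_i\chi_i+\sum_{\ell=0}^{d-3}n_\ell\rho_\ell$ in terms of the irreducible $FD$-characters of Table \ref{tab:chartable} and $(\ref{eq:goodchar1})$, is not fixed by multiplication by any such $\chi_j$. First, $\chi_j\cdot\rho_\ell=\rho_\ell$ for every $j$ and $\ell$: the tensor $\chi_j\otimes V_\ell$ is a simple $FD$-module of $F$-dimension $2^{\ell+1}$, and the $F$-dimensions of $V_0,\ldots,V_{d-3}$ are pairwise distinct. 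Second, on the four linear characters each non-trivial $\chi_j$ acts as a fixed-point-free involution that pairs $\chi_1\leftrightarrow\chi_j$ and swaps the remaining two. Invariance of the character under any such $\chi_j$ would therefore force $\sum_i m_i$ to be even. But from $\sum_i m_i+\sum_\ell n_\ell\cdot 2^{\ell+1}=\dim_k A_{\nu,n}=n\cdot 2^{d-1}+1$, reducing modulo $2$ gives $\sum_i m_i\equiv 1\pmod 2$, a contradiction. Hence the $U_i$ are pairwise non-isomorphic, yielding four distinct morphisms $R\to W$.
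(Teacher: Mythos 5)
Your proof is correct, but it takes a genuinely different route from the paper. The paper constructs the four lifts explicitly by induction on $n$: it applies Lemma \ref{lem:Wlift} to the sequence $(\ref{eq:ohwell})$, using the two $W$-lifts of $\mathrm{Ind}_{\langle\nu\rangle}^D k$ from Lemma \ref{lem:maxliftsW}, and carries along the precise $F$-characters of the four lifts of $A_{\nu,n}$ in order to verify the $\mathrm{Hom}$-dimension hypothesis of Lemma \ref{lem:Wlift} at each step. You instead take a single lift $U$ (guaranteed by Alperin's lifting theorem for endo-trivial modules over $2$-groups, which the paper cites in the introduction), twist by the four linear characters of $D$ (which reduce trivially mod $2$), and distinguish the twists by a parity argument: the $\rho_\ell$-part of the character is fixed by every $\chi_j$ (your simplicity-plus-dimension argument for $\chi_j\otimes V_\ell\cong V_\ell$ is valid, since the $V_\ell$ together with the four linear characters exhaust the simple $FD$-modules and have pairwise distinct dimensions), while invariance on the linear part would force the total multiplicity of linear constituents to be even, contradicting $\dim_k A_{\nu,n}=n2^{d-1}+1$ being odd. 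This is a clean argument and it suffices for the application in the proof of Theorem \ref{thm:supermain}, where only the existence of four distinct morphisms $R(D,A_{\nu,n})\to W$ is used. What the two approaches buy: the paper's induction is self-contained (no appeal to Alperin) and yields the explicit $F$-characters of all four lifts; your argument is shorter, avoids the character bookkeeping, and in fact shows that \emph{any} $W$-lift of \emph{any} endo-trivial $kD$-module produces four pairwise non-isomorphic twists, since only oddness of the dimension is used. One caveat: your parenthetical alternative for producing $U$ by iterating Lemma \ref{lem:Wlift} is not a free substitute for Alperin's theorem, because verifying the hypothesis $\dim_F\mathrm{Hom}_{FD}(F\otimes X_1,F\otimes X_2)=\dim_k\mathrm{Hom}_{kD}(Y_1,Y_2)-1$ requires exactly the kind of control over the characters of the chosen lifts that the paper's induction is designed to provide; as stated, that alternative is incomplete, though your main route does not depend on it.
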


\begin{proof}
We use Lemmas \ref{lem:Wlift} and \ref{lem:maxliftsW} to prove this.
If $n=0$ then $A_{\nu,0}$ is the trivial simple $kD$-module $k$ which has 
four pairwise non-isomorphic lifts
over $W$ whose $F$-characters are given by the four ordinary irreducible characters of degree one 
$\chi_1,\chi_2,\chi_3,\chi_4$ (see Table \ref{tab:chartable}). 
By Lemma \ref{lem:thiswilldo}, there is a short exact sequence of $kD$-modules of the form
$$0\to \mathrm{Ind}_{\langle\nu\rangle}^D\,k\to A_{\nu,n+1}\to A_{\nu,n}\to 0.$$

By  Frobenius reciprocity and the Eckman-Shapiro Lemma,  we have for all $n\ge 0$
\begin{eqnarray*}
\mathrm{Hom}_{kD}(A_{\nu,n},\mathrm{Ind}_{\langle\nu\rangle}^D\,k)&\cong&
\mathrm{Hom}_{k\langle\nu\rangle}(\mathrm{Res}_{\langle\nu\rangle}^D\,A_{\nu,n},k)
\;\cong \;k^{n2^{d-2}+1}\quad\mbox{and}\\
\mathrm{Ext}^1_{kD}(A_{\nu,n},\mathrm{Ind}_{\langle\nu\rangle}^D\,k)&\cong&
\mathrm{Ext}^1_{k\langle\nu\rangle}(\mathrm{Res}_{\langle\nu\rangle}^D\,A_{\nu,n},k)
\;\cong\; k 
\end{eqnarray*}
where the second isomorphisms follow since $\mathrm{Res}_{\langle\nu\rangle}^D\,A_{\nu,n}
\cong k\oplus (k\langle\nu\rangle)^{n2^{d-2}}$ by Lemma \ref{lem:thiswilldo}.

Let $\rho=\sum_{\ell=0}^{d-3} \rho_\ell$ for $\rho_\ell$ as in $(\ref{eq:goodchar1})$.
Then every $FD$-module $T$ with $F$-character $\rho$ satisfies by $(\ref{eq:goodendos1})$
$$\mathrm{dim}_F\,\mathrm{End}_{FD}(T)=\sum_{\ell=0}^{d-3}\mathrm{dim}_F\,\mathrm{End}_{FD}(
V_\ell) =\sum_{\ell=0}^{d-3}2^\ell = 2^{d-2}-1.$$
Let $(c,d)=(3,4)$ if $\nu=\sigma$, and let $(c,d)=(4,3)$ if $\nu=\tau$.
Assume by induction that $A_{\nu,n}$ has four pairwise non-isomorphic lifts
over $W$ whose $F$-characters are given by 
\begin{itemize}
\item[i.] $\chi_1+m(\chi_1+\chi_c)+m(\chi_2+\chi_d)+2m\,\rho$, or \\
$\chi_c+m(\chi_1+\chi_c)+m(\chi_2+\chi_d)+2m\,\rho$, or
\item[ii.] $\chi_2+m(\chi_1+\chi_c)+m(\chi_2+\chi_d)+2m\,\rho$, or \\
$\chi_d+m(\chi_1+\chi_c)+m(\chi_2+\chi_d)+2m\,\rho$
\end{itemize}
if $n=2m$ for some $m\ge 0$, and by
\begin{itemize}
\item[i$'$.] $\chi_1+m(\chi_1+\chi_c)+(m+1)(\chi_2+\chi_d)+(2m+1)\rho$, or \\
$\chi_c+m(\chi_1+\chi_c)+(m+1)(\chi_2+\chi_d)+(2m+1)\rho$, or
\item[ii$'$.] $\chi_2+(m+1)(\chi_1+\chi_c)+m(\chi_2+\chi_d)+(2m+1)\rho$, or \\
$\chi_d+(m+1)(\chi_1+\chi_c)+m(\chi_2+\chi_d)+(2m+1)\rho$
\end{itemize}
if $n=2m+1$ for some $m\ge 0$.
By Lemma \ref{lem:maxliftsW}, $\mathrm{Ind}_{\langle\nu\rangle}^D\,k$ 
has two pairwise non-isomorphic lifts over $W$ with
$F$-characters $\chi_1+\chi_c+\rho$ or $\chi_2+\chi_d+\rho$.
If $\mathcal{A}_{\nu,n}$ is a lift of $A_{\nu,n}$ over $W$ with $F$-character
as in (i) or (ii$'$), let $\mathcal{I}$ be a lift of $\mathrm{Ind}_{\langle\nu\rangle}^D\,k$ 
over $W$ with $F$-character 
$\chi_2+\chi_d+\rho$. If $\mathcal{A}_{\nu,n}$ is a lift of $A_{\nu,n}$ over $W$ with $F$-character
as in (ii) or (i$'$), let $\mathcal{I}$ be a lift of $\mathrm{Ind}_{\langle\nu\rangle}^D\,k$ over 
$W$ with $F$-character $\chi_1+\chi_c+\rho$.
Then we obtain for $n=2m$,
$$\mathrm{dim}_F\,\mathrm{Hom}_{FD}(F\otimes_W\mathcal{A}_{\nu,n},F\otimes_W\mathcal{I})
=2m\,2^{d-2} =  n2^{d-2},$$
and for $n=2m+1$,
$$\mathrm{dim}_F\,\mathrm{Hom}_{FD}(F\otimes_W\mathcal{A}_{\nu,n},F\otimes_W\mathcal{I})
=(2m+1)\,2^{d-2}=n2^{d-2}.$$
Hence by Lemma \ref{lem:Wlift}, $A_{\nu,n+1}$ has four pairwise non-isomorphic lifts over $W$ whose
$F$-characters are as in (i),(ii), respectively (i$'$), (ii$'$), if we replace $n$ by $n+1$.
\end{proof}

\medskip

\noindent
\textit{Proof of Theorem \ref{thm:supermain}.}
Part (i) follows from Lemma \ref{lem:endotrivial}. 
For part (ii),
let $V$ be an arbitrary finitely generated endo-trivial $kD$-module. By Lemma \ref{lem:defhelp}(ii),
it is enough to determine $R(D,V)$ in case $V$ is indecomposable.
This means by Remark \ref{rem:allIneed} that $V$ is in the $\Omega$-orbit
of $k$ or of $A_{\sigma,n}$ or of $A_{\tau,n}$ for  some $n\ge 1$. 

Let $\nu\in\{\sigma,\tau\}$ and let $n\ge 0$.
It follows by Lemmas 
\ref{lem:almostudr} and \ref{lem:almostdone} that there is a surjective morphism
$$\alpha:W[\mathbb{Z}/2\times\mathbb{Z}/2]\to R(D,A_{\nu,n})$$ in $\mathcal{C}$
and that there are four distinct morphisms $R(D,A_{\nu,n})\to W$ in $\mathcal{C}$.
Hence $\mathrm{Spec}(R(D,A_{\nu,n}))$ contains all four points of the generic 
fiber of $\mathrm{Spec}(W[\mathbb{Z}/2\times\mathbb{Z}/2])$.
Since the Zariski closure of these four points is all of $\mathrm{Spec}(W[\mathbb{Z}/2\times\mathbb{Z}/2])$,
this implies that $R(D,A_{\nu,n})$ must be isomorphic to $W[\mathbb{Z}/2\times\mathbb{Z}/2]$.
By Lemma \ref{lem:defhelp}, it follows that $R(D,V)\cong W[\mathbb{Z}/2\times \mathbb{Z}/2]$
for every finitely generated endo-trivial $kD$-module $V$.

Let $U$ be a universal lift of $V$  over $R=R(D,V)$. 
Since $V$ is endo-trivial, the rank of $U$ as a free $R$-module is odd. This implies that
as $RD$-modules
$$U^*\otimes_R U\cong R\oplus L$$
where $D$ acts trivially on $R$ and $L$ is some $RD$-module which is free over $R$. 
Since $U^*\otimes_R U$ is a lift of  $V^*\otimes_k V$ over $R$ and
$V$ is endo-trivial, this implies that $k\otimes_R L$ is isomorphic to a free $kD$-module.
Hence $L$ is a free $RD$-module, which implies that $U$ is endo-trivial.
\hspace*{\fill} $\Box$

\medskip

We now turn to nilpotent blocks and the proof of Corollary \ref{cor:nilpotent}.

\begin{rem}
\label{rem:nilpotent}
Keeping the previous notation,
let $G$ be a finite group and let $B$ be a nilpotent block of $kG$ with defect group $D$.
By \cite{brouepuig}, this means that whenever $(Q,f)$ is a $B$-Brauer pair then 
the quotient $N_G(Q,f)/C_G(Q)$ 
is a $2$-group. 
In other words,
for all subgroups $Q$ of $D$ and for all block idempotents $f$ of $kC_G(Q)$ associated with $B$, the 
quotient of the stabilizer $N_G(Q,f)$ of $f$ in $N_G(Q)$ by the centralizer $C_G(Q)$
is a $2$-group.
In \cite{puig}, Puig rephrased this definition using the theory of local pointed groups. 

The main result of \cite{puig} implies that the nilpotent block $B$ is Morita equivalent to $kD$. In
\cite[Thm. 8.2]{puig2}, Puig showed that the converse is also true in a very strong way.
Namely, if $B'$ is another block such that there is a stable equivalence of
Morita type between $B$ and $B'$, then $B'$ is also nilpotent. Hence Corollary
\ref{cor:nilpotent} can be applied in particular if there is only known to be a stable
equivalence of Morita type between $B$ and $kD$.
\end{rem}

\noindent
\textit{Proof of Corollary \ref{cor:nilpotent}.}
Let $\hat{B}$ be the block of $WG$ corresponding to $B$. Then $\hat{B}$ is also nilpotent, and
by \cite[\S1.4]{puig}, $\hat{B}$ is Morita equivalent to $WD$. 
Suppose $V$ is a finitely generated $B$-module, and $V'$ is the $kD$-module corresponding to
$V$ under this Morita equivalence.
Then $V$ has stable endomorphism ring $k$ if and only if $V'$ has
stable endomorphism ring $k$. 
Moreover, it follows for example from \cite[Prop. 2.5]{bl} that $R(G,V)\cong R(D,V')$.
By Theorem \ref{thm:supermain},
this implies that $R(G,V)\cong W[\mathbb{Z}/2\times\mathbb{Z}/2]$.
\hspace*{\fill} $\Box$

%%%%%%%%%%%%%%%%%%%%%%%%%%%%%%%%%%%%%%%%%%%%%%%%%%%%%%%%%
%% Bibliography
%%%%%%%%%%%%%%%%%%%%%%%%%%%%%%%%%%%%%%%%%%%%%%%%%%%%%%%%%


\begin{thebibliography}{88}

\bibitem{alp} J.~L.~Alperin, Local representation theory. Modular representations as an introduction to the local representation theory of finite groups. Cambridge Studies in Advanced Mathematics, vol. 11, Cambridge University Press, Cambridge, 1986.

\bibitem{alpendotrivial} J.~L.~Alperin, Lifting endo-trivial modules. J. Group Theory 4 (2001), 1--2.

\bibitem{AC} M.~Auslander and J.~F.~Carlson, Almost-split sequences and group rings. J. Algebra
103 (1986), 122-140.

\bibitem{bl} F.~M.~Bleher, Universal deformation rings and Klein four defect groups. Trans. Amer. Math. Soc. 354 (2002), 3893--3906.
	

\bibitem{3sim} F.~M.~Bleher, Universal deformation rings and dihedral defect groups. In press,
Trans. Amer. Math. Soc.


\bibitem{bc} F.~M.~Bleher and T.~Chinburg, Universal deformation rings and cyclic blocks. Math. Ann. 318 (2000), 805--836.

\bibitem{bc4.9} F.~M.~Bleher and T.~Chinburg, Universal deformation rings need not be
complete intersections. C. R. Math. Acad. Sci. Paris 342 (2006),  229--232.

\bibitem{bc5} F.~M.~Bleher and T.~Chinburg, Universal deformation rings need not be complete intersections, Math. Ann. 337 (2007),  739--767.


\bibitem{brouepuig} M.~Brou\'{e} and L.~Puig, A Frobenius theorem for blocks.  
Invent. Math.  56  (1980), 117--128.


\bibitem{buri} M.~C.~R.~Butler and C.~M.~Ringel, Auslander-Reiten sequences with few middle terms 
and applications to string algebras. Comm. Algebra 15 (1987), 145--179. 

\bibitem{cantrememberhisname} J.~Byszewski, A universal deformation ring which is not a complete intersection ring.  C. R. Math. Acad. Sci. Paris  343  (2006),  565--568. 

\bibitem{carl1} J.~F.~Carlson,  A characterization of endotrivial modules over $p$-groups.  
Manuscripta Math.  97  (1998),   303--307.

\bibitem{CT} J.~F.~Carlson and J.~Th\'{e}venaz, Torsion endo-trivial modules.  Algebr. Represent. 
Theory  3  (2000),  303--335. 

\bibitem{carl2} J.~F.~Carlson and J.~Th\'{e}venaz, The classification of endo-trivial modules.  
Invent. Math.  158  (2004),  389--411.

\bibitem{carl1.5} J.~F.~Carlson and J.~Th\'{e}venaz, The classification of torsion endo-trivial modules.  
Ann. of Math. (2)  162  (2005),  823--883.

\bibitem{flach} T.~Chinburg, Can deformation rings of group representations not be local complete intersections? In: Problems from the Workshop on Automorphisms of Curves.  Edited by Gunther Cornelissen and Frans Oort, with contributions by I. Bouw, T. Chinburg, G. Cornelissen, C. Gasbarri, D. Glass, C. Lehr, M. Matignon, F. Oort, R. Pries and S. Wewers. Rend. Sem. Mat. Univ. Padova 113 (2005), 129--177.


\bibitem{dade} E.~Dade, Endo-permutation modules over $p$-groups. I; II. Ann. of Math. 107 (1978), 459--494; 108 (1978), 317--346.

\bibitem{lendesmit} B.~de Smit and H.~W.~Lenstra, Explicit construction of universal deformation rings. In: Modular Forms and Fermat's Last Theorem (Boston, MA, 1995), Springer-Verlag, Berlin-Heidelberg-New York, 1997, pp. 313--326.

\bibitem{erd} K.~Erdmann, Blocks of Tame Representation Type and Related Algebras, Lecture Notes in Mathematics, vol. 1428, Springer-Verlag, Berlin-Heidelberg-New York, 1990.



\bibitem{krau} H.~Krause, Maps between tree and band modules. J. Algebra 137 (1991), 186--194.


\bibitem{linckel} M.~Linckelmann, A derived equivalence for blocks with dihedral defect groups. J. Algebra 164 (1994), 244--255.

\bibitem{linckel2} M.~Linckelmann, The source algebras of blocks with a Klein four defect group.  J. Algebra  167  (1994),  no. 3, 821--854. 

\bibitem{linckel3} M.~Linckelmann, On stable equivalences of Morita type.  Derived equivalences for group rings,  221--232, Lecture Notes in Math., 1685, Springer, Berlin, 1998.
       
\bibitem{maz1} B.~Mazur, Deforming Galois representations. In: Galois groups over $\mathbb{Q}$ (Berkeley, CA, 1987), Springer-Verlag, Berlin-Heidelberg-New York, 1989, pp. 385--437.


\bibitem{puig} L.~Puig, Nilpotent blocks and their source algebras.  Invent. Math.  93  (1988),  77--116.        

\bibitem{puig2} L.~Puig, On the local structure of Morita and Rickard equivalences between Brauer blocks. 
Progress in Mathematics, 178. Birkh\"auser Verlag, Basel, 1999.


\bibitem{thevenaz} J.~Th\'{e}venaz, $G$-algebras and modular representation theory. Oxford University Press, Oxford, 1995.


\end{thebibliography}
\end{document}